
\documentclass[letterpaper,11pt]{amsart}


\usepackage[margin=1.2in]{geometry}
\usepackage{amsmath,amsthm,amssymb}
\usepackage{xspace,xcolor}
\usepackage[breaklinks,colorlinks,citecolor=teal,linkcolor=teal,urlcolor=teal,pagebackref,hyperindex]{hyperref}
\usepackage[alphabetic]{amsrefs}
\usepackage[all]{xy}


\theoremstyle{plain}
\newtheorem{thm}{Theorem}[section]

\newtheorem{lem}[thm]{Lemma}
\newtheorem{prop}[thm]{Proposition}
\newtheorem{cor}[thm]{Corollary}

\theoremstyle{definition}
\newtheorem{defi}[thm]{Definition}

\newtheorem{eg}[thm]{Example}

\theoremstyle{remark}
\newtheorem{rmk}[thm]{Remark}

\newtheorem*{ack}{Acknowledgments}



\def\Z{{\mathbb Z}}

\def\R{{\mathbb R}}

\def\A{{\mathbb A}}

\def\cO{\mathcal{O}}

\def\O{\mathcal{O}}

\def\fa{\mathfrak{a}}
\def\fb{\mathfrak{b}}
\def\fc{\mathfrak{c}}

\def\fm{\mathfrak{m}}

\def\fra{\mathfrak{a}}

\def\frm{\mathfrak{m}}

\def\a{\alpha}

\def\g{\gamma}

\def\f{\phi}
\def\ff{\psi}

\def\p{\pi}

\def\s{\sigma}

\def\Om{\Omega}

\def\.{\cdot}
\def\^{\widehat}
\def\~{\widetilde}
\def\o{\circ}
\def\ov{\overline}

\def\inj{\hookrightarrow}

\def\de{\partial}

\def\({\left(}
\def\){\right)}

\newcommand{\llbracket}{[\negthinspace[}
\newcommand{\rrbracket}{]\negthinspace]}

\renewcommand{\and}{ \ \ \text{ and } \ \ }

\def\Jac{\mathrm{Jac}}

\DeclareMathOperator{\codim} {codim}

\DeclareMathOperator{\Spec} {Spec}

\DeclareMathOperator{\val} {val}

\DeclareMathOperator{\ord} {ord}

\DeclareMathOperator{\Supp} {Supp}

\DeclareMathOperator{\vol} {vol}
\DeclareMathOperator{\ini} {in}

\DeclareMathOperator{\lct} {lct}

\DeclareMathOperator{\Fitt} {Fitt}

\DeclareMathOperator{\jac} {Jac}

\DeclareMathOperator{\Cont} {Cont}

\DeclareMathOperator{\jetcodim} {jet-codim}

\title{The volume of a set of arcs on a variety}

\author{Tommaso de Fernex}

\address{Department of Mathematics, University of Utah, Salt Lake City, UT 48112, USA}
\email{{\tt defernex@math.utah.edu}}

\author{Mircea Musta\c t\u a}

\address{Department of Mathematics, University of Michigan, Ann Arbor, MI 48109, USA}
\email{{\tt mmustata@umich.edu}}

\dedicatory{Dedicated to Lucian B\u{a}descu on the~occasion of
his~seventieth~birthday}

\subjclass[2010]{Primary 13H15, 14E18; Secondary 13A18, 14B05}
\keywords{Arc space, graded sequence of ideals, valuation, Mather discrepancy} 

\thanks{The research of the first author was partially supported by 
NSF grants DMS-1402907 and DMS-1265285.
The research of the second author was partially supported by 
NSF grants DMS-1401227 and DMS-1265256.}



\begin{document}

\maketitle


\section{Introduction}

In this paper, we give a definition of volume for subsets in 
the space of arcs of an algebraic variety and study its properties. 
As our definition implies that the volume of a set of arcs is finite
if and only if its projection to the variety is a finite set of closed points,
we can restrict without loss of generality to the case of an affine variety.
Suppose therefore that $X=\Spec(R)$ is an $n$-dimensional 
affine algebraic variety defined over an algebraically closed field of characteristic zero.
For every ideal $\fa$ in $R$ we denote by $\ell(R/\fa)$ the length of the quotient ring $R/\fa$
and, if the cosupport consists of one point $x$ defined by the ideal $\frm_x$,
we denote by $e(\fa)$ the
Hilbert--Samuel multiplicity of $R_{\fm_x}$ with respect to $\fa R_{\fm_x}$.

Let $X_\infty$ be the arc scheme of $X$. Recall that for every field extension $K/k$, the
$K$-valued points of $X_{\infty}$ are in natural bijection with the arcs
$\Spec K\llbracket t\rrbracket \to X$ (see \cite[Section 3]{EM09}). 
For every subset $C \subseteq X_\infty$ and any integer $m \ge 0$, we consider the ideal 
\[
\fb_m(C) := \{ f \in R \mid \ord_\g(f) \ge m \text{ for all } \g \in C \}.
\]
This defines a graded sequence of ideals $\fb_\bullet(C) = (\fb_m(C))_{m \ge 0}$. 
We then define the volume of $C$ by the formula
\[
\vol(C) := \vol(\fb_\bullet(C)) = \limsup_{m \to \infty} \frac{\ell(R/\fb_m(C))}{m^n/n!}.
\]
It follows from \cite{Cut14} that the limsup is, in fact, a limit. 
It is easy to see that $\vol(C) < \infty$ if and only if the image of $\pi(C)$ in $X$
is a finite set of
closed points.
Here $\p \colon X_\infty \to X$
is the canonical projection mapping an arc $\g$ to its base point $\g(0)$. 
The volume satisfies the following inclusion/exclusion property. 

\begin{prop}
\label{p:inclusion-exclusion}
If $C_1, C_2 \subseteq X_\infty$, then 
\[
\vol(C_1 \cup C_2) + \vol(C_1 \cap C_2) \le \vol(C_1) + \vol(C_2).
\]
\end{prop}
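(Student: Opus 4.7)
The plan is to establish the analogous inclusion/exclusion inequality at the level of the ideals $\fb_m(C_i)$ for each fixed $m$, and then divide by $m^n/n!$ and pass to the limit.

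First I would unpack the definitions to see how the ideals behave under union and intersection of the sets of arcs. The defining condition $\ord_\g(f) \ge m$ for every $\g \in C_1 \cup C_2$ is equivalent to requiring it simultaneously for every $\g \in C_1$ and every $\g \in C_2$, which yields the clean identity
\[
\fb_m(C_1 \cup C_2) \ = \ \fb_m(C_1) \cap \fb_m(C_2).
\]
For the intersection, if $f \in \fb_m(C_1)$ and $g \in \fb_m(C_2)$, then for every $\g \in C_1 \cap C_2$ we have $\ord_\g(f+g) \ge m$, so
\[
\fb_m(C_1) + \fb_m(C_2) \ \subseteq \ \fb_m(C_1 \cap C_2).
\]

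Next I would invoke the standard short exact sequence
\[
0 \to R/\bigl(\fb_m(C_1) \cap \fb_m(C_2)\bigr) \to R/\fb_m(C_1) \oplus R/\fb_m(C_2) \to R/\bigl(\fb_m(C_1) + \fb_m(C_2)\bigr) \to 0,
\]
which gives the length identity
\[
\ell\bigl(R/(\fb_m(C_1) \cap \fb_m(C_2))\bigr) + \ell\bigl(R/(\fb_m(C_1) + \fb_m(C_2))\bigr) = \ell(R/\fb_m(C_1)) + \ell(R/\fb_m(C_2)).
\]
Combining with the two inclusions above (the first an equality, the second giving the inequality $\ell(R/(\fb_m(C_1) + \fb_m(C_2))) \ge \ell(R/\fb_m(C_1 \cap C_2))$) produces
\[
\ell(R/\fb_m(C_1 \cup C_2)) + \ell(R/\fb_m(C_1 \cap C_2)) \ \le \ \ell(R/\fb_m(C_1)) + \ell(R/\fb_m(C_2)).
\]

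Finally I would divide by $m^n/n!$ and take $m \to \infty$. Since the existence of each of the four volumes as an honest limit is guaranteed by the cited result of Cutkosky \cite{Cut14}, the sums on each side converge to $\vol(C_1\cup C_2)+\vol(C_1\cap C_2)$ and $\vol(C_1)+\vol(C_2)$ respectively, yielding the claim. There is no real obstacle: the only point requiring any care is distinguishing the equality for unions from the mere inclusion for intersections, and remembering to use existence of the limit (rather than superadditivity of $\limsup$) so that the two terms on the left-hand side can be added without loss.
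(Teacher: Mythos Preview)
Your proof is correct and follows essentially the same approach as the paper: establish the identity $\fb_m(C_1\cup C_2)=\fb_m(C_1)\cap\fb_m(C_2)$ and the inclusion $\fb_m(C_1)+\fb_m(C_2)\subseteq\fb_m(C_1\cap C_2)$, apply the standard short exact sequence to get the length inequality, then divide by $m^n/n!$ and pass to the limit using that the $\limsup$ is actually a limit. The paper's proof is identical in structure and in the specific observations used.
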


The \emph{contact locus} of order at least $q$ of an ideal $\fa \subseteq R$ is defined to be
\[
\Cont^{\ge q}(\fa) = \{ \g \in X_\infty \mid \ord_\g(\fa) \ge q\}.
\]
Contact loci form a special class of subsets in $X_\infty$. For ideals cosupported at one point,
the volumes of these sets relate to the Samuel multiplicities of the ideal in the following way.

\begin{prop}
\label{p:vol-cont}
For every ideal $\fa \subseteq R$ whose cosupport consists of one point and for every $m,p\geq 1$, we have 
\[
m^n \. \vol(\Cont^{\ge m}(\fa)) \le (mp)^n \. \vol(\Cont^{\ge mp}(\fa)) \le e(\fa)
\]
for every $m,p \ge 1$. Furthermore, both inequalities are equalities for $m$ sufficiently divisible. 
\end{prop}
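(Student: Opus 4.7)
Set $C_m := \Cont^{\ge m}(\fa)$. The plan is to sandwich the ideals $\fb_q(C_m)$ between integral closures of powers of $\fa$ and to read off the volume bounds from the Hilbert--Samuel asymptotic $\ell(R/\overline{\fa^k}) = e(\fa)\,k^n/n! + O(k^{n-1})$. For the first inequality, the substitution $t\mapsto t^p$ defines a reparametrization $\psi_p\colon X_\infty\to X_\infty$ with $\ord_{\psi_p(\gamma)} = p\cdot\ord_\gamma$, so $\psi_p(C_m)\subseteq C_{mp}$. Given $f\in\fb_{qp}(C_{mp})$ and $\gamma\in C_m$, the inequality $p\,\ord_\gamma(f) = \ord_{\psi_p(\gamma)}(f)\ge qp$ yields $\fb_{qp}(C_{mp})\subseteq\fb_q(C_m)$; dividing lengths by $q^n/n!$ and letting $q\to\infty$ gives $\vol(C_m)\le p^n\vol(C_{mp})$, i.e.\ $m^n\vol(C_m)\le (mp)^n\vol(C_{mp})$.

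For the second inequality, each arc $\gamma$ defines a valuation $\ord_\gamma$ on $R$, and taking $\ord_\gamma$ of an integral relation $f^d + a_1 f^{d-1} + \cdots + a_d = 0$ with $a_i\in\fa^{ki}$ shows that $f\in\overline{\fa^k}$ forces $\ord_\gamma(f)\ge k\,\ord_\gamma(\fa)$ (the case $\ord_\gamma(\fa)=\infty$ is trivial, since then $\gamma$ is constant at the cosupport point of $\fa$ and $f\in\fm_x$ gives $\ord_\gamma(f)=\infty$). Hence $\overline{\fa^k}\subseteq\fb_{km}(C_m)$ for every $k\ge 1$, and by monotonicity $\overline{\fa^{\lceil q/m\rceil}}\subseteq\fb_q(C_m)$ for every $q$. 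Dividing lengths by $q^n/n!$ and passing to the limit yields $\vol(C_m)\le e(\fa)/m^n$; applied to $mp$, this is $(mp)^n\vol(C_{mp})\le e(\fa)$.

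To upgrade both inequalities to equalities when $m$ is sufficiently divisible, let $v_1,\ldots,v_s$ be the Rees valuations of $\fa$ and set $a_i:=v_i(\fa)$, so that $\overline{\fa^k} = \{f\in R : v_i(f)\ge ka_i \text{ for all } i\}$. When $m$ is divisible by $\mathrm{lcm}(a_1,\ldots,a_s)$, each $v_i$ is realized as $\ord_{\gamma_i}$ for some arc $\gamma_i$ (take the generic arc along the corresponding prime divisor on a birational model), and its $t\mapsto t^{m/a_i}$ reparametrization is an arc in $C_m$. For $f\notin\overline{\fa^k}$, choosing $i$ with $v_i(f)<ka_i$ produces $\gamma\in C_m$ with $\ord_\gamma(f) = (m/a_i)\,v_i(f)< km$, proving $\fb_{km}(C_m)\subseteq\overline{\fa^k}$; combined with the previous paragraph, $\fb_{km}(C_m) = \overline{\fa^k}$ for all $k\ge 1$. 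A two-sided sandwich of $\fb_q(C_m)$ between $\overline{\fa^{\lceil q/m\rceil}}$ and $\overline{\fa^{\lfloor q/m\rfloor}}$ now yields $\vol(C_m) = e(\fa)/m^n$, and the same applies with $mp$ in place of $m$. The main obstacle is exactly this reverse inclusion $\fb_{km}(C_m)\subseteq\overline{\fa^k}$: realizing the Rees valuations by arcs that sit inside $C_m$ is what forces the divisibility hypothesis on $m$.
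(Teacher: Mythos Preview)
Your proof is correct and follows essentially the same strategy as the paper: the reparametrization $t\mapsto t^p$ for the first inequality, the inclusion of (integral closures of) powers of $\fa$ in the valuation ideals $\fb_\bullet(C_m)$ for the second, and Rees valuations for the equality case. The only cosmetic differences are that the paper computes via multiplicities using $\vol(\fb_\bullet)=\lim_m e(\fb_m)/m^n$ rather than lengths of quotients by $\overline{\fa^k}$, and phrases the equality step in terms of the maximal divisorial sets $C_X^{m/q_i}(E_i)\subseteq C_m$ (whose associated valuation is $(m/q_i)\ord_{E_i}$) rather than individual arcs realizing the Rees valuations.
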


Generalizing the definition of codimension of a cylinder in the space of arcs of a smooth variety, 
we define the \emph{jet-codimension} of an irreducible closed subset $C$ of $X_\infty$ to be
\[
\jetcodim(C) := \lim_{p \to \infty}\((p+1)n - \dim \ov{\p_p(C)} \)
\]
where $\p_p \colon X_\infty \to X_p$ is the truncation map to the $p$-jet space. 
This definition extends to an arbitrary set $C \subseteq X_\infty$ by taking the 
smallest jet-codimension of the irreducible components of the  closure $\ov C$
of $C$ in $X_\infty$. 
We will see, for instance, that 
if $X$ is smooth, then the jet-codimension of a set $C$ coincides with its Krull codimension $\codim(C)$
(which is similarly defined as the smallest Krull codimension of an irreducible component of $\ov C$).

Our main result relates the volume of a set of arcs
on a Cohen--Macaulay variety to its jet-codimension. 

\begin{thm}
\label{t:vol-codim}
If $X$ is Cohen--Macaulay, of dimension $n$, then 
for every subset $C \subseteq X_{\infty}$ whose image in $X$ is a closed point we have
\[
\vol(C)^{1/n}\.\jetcodim(C) \ge n.
\]
In particular, if $X$ is smooth, then
\[
\vol(C)^{1/n}\.\codim(C) \ge n.
\]
\end{thm}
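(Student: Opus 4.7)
The plan is to bound $\jetcodim(C)$ from below in terms of $\ell(R/\fb_m(C))$ for each $m\ge 1$, and then pass to the limit using $\ell(R/\fb_m(C))\sim \vol(C)\,m^n/n!$. Fix $m\ge 1$ and $p\ge m-1$. By definition of $\fb_m(C)$, every $\gamma\in C$ satisfies $\gamma^*(\fb_m(C))\subseteq(t^m)$, so $\pi_p(C)\subseteq X_p$ is contained in the preimage of the jet scheme $(V(\fb_m(C)))_{m-1}\subseteq X_{m-1}$ under the truncation $X_p\to X_{m-1}$. Cohen--Macaulayness of $X$ lets one bound the fibre dimensions of this truncation over points above $x$ by $(p+1-m)n$, hence $\dim\ov{\pi_p(C)}\le\dim(V(\fb_m(C)))_{m-1}+(p+1-m)n$; letting $p\to\infty$ gives
\[
\jetcodim(C)\ \ge\ mn-\dim(V(\fb_m(C)))_{m-1}\qquad\text{for every } m\ge 1.
\]
The theorem thus reduces to the uniform codimension estimate
\[
\codim_{X_{m-1}}(V(\fa))_{m-1}\ \ge\ \frac{mn}{(n!\,\ell(R/\fa))^{1/n}}
\]
for every $\frm_x$-primary ideal $\fa\subseteq R$: applied to $\fa=\fb_m(C)$ and combined with the asymptotic above, letting $m\to\infty$ yields $\jetcodim(C)\ge n/\vol(C)^{1/n}$.

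To prove this codimension estimate I would reduce to the case of a monomial $\frm$-primary ideal in a regular local ring via a Gr\"obner-type degeneration: the initial monomial ideal $\ini(\fa)$ has the same colength as $\fa$, while $\dim(V(\fa))_{m-1}\le\dim(V(\ini(\fa)))_{m-1}$ by upper semi-continuity of fibre dimensions in flat families (applied to the relative jet scheme of the family deforming $\fa$ to $\ini(\fa)$). For a monomial $\frm$-primary ideal $\fa$ in coordinates $y_1,\dots,y_n$, the jet scheme $(V(\fa))_{m-1}$ stratifies by quasi-monomial valuations: the stratum indexed by $a=(a_1,\dots,a_n)\in\Z_{>0}^n$ has codimension $\sum_i a_i$ in $X_{m-1}$ and is contained in $(V(\fa))_{m-1}$ iff $\langle a,\beta\rangle\ge m$ for every exponent $\beta$ of a monomial generator of $\fa$; hence $\codim(V(\fa))_{m-1}=\min\sum_i a_i$ over such valid $a$. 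For any valid $a$ the open simplex $D_a=\{v\in\R^n_{\ge 0}:\langle a,v\rangle<m\}$ is contained in the downward-closed region $T_\R\subseteq\R^n_{\ge 0}$ complementary to the monomial support of $\fa$, since $\langle a,v\rangle<m\le\langle a,\beta\rangle$ forces $v\not\ge\beta$ coordinate-wise; downward-closedness of $T_\R$ implies that every point of $T_\R$ rounds down to a lattice point in $T_\R\cap\Z^n_{\ge 0}$, so
\[
\ell(R/\fa)\ =\ |T_\R\cap\Z^n_{\ge 0}|\ \ge\ \vol(T_\R)\ \ge\ \vol(D_a)\ =\ \frac{m^n}{n!\prod_i a_i},
\]
and the arithmetic--geometric mean inequality now gives $\sum_i a_i\ge n(\prod_i a_i)^{1/n}\ge mn/(n!\,\ell(R/\fa))^{1/n}$, completing the argument in the monomial case.

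The main obstacle is the Cohen--Macaulay (rather than smooth) setting. For smooth $X$, the truncations $X_p\to X_{m-1}$ are affine-bundle maps of rank $(p+1-m)n$, so the first dimensional inequality is immediate, and the Gr\"obner reduction works directly inside a polynomial ring at $x$. For a Cohen--Macaulay $X$ which is not smooth, one must instead use the Cohen structure theorem to realise $\widehat{\O_{X,x}}$ as a module-finite free extension of a regular local ring, and then carefully control both the fibre dimensions of truncations between the (singular) jet schemes $X_p$ and $X_{m-1}$ and the comparison of $\dim(V(\fa))_{m-1}$ with its counterpart on the smooth ambient ring. Extending the key codimension estimate for $\dim(V(\fa))_{m-1}$ from the regular-local case to the Cohen--Macaulay case is where the bulk of the technical effort lies.
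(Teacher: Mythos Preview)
Your overall strategy---bounding $\jetcodim(C)$ by the jet-codimension of $\Cont^{\ge m}(\fb_m(C))$ and letting $m\to\infty$---is exactly the paper's. In the smooth case your argument is correct and is a transparent unpacking of the inequality $(n!\,\ell(R/\fa))^{1/n}\lct(\fa)\ge n$ that the paper imports from \cite{dFEM04}; indeed, the Gr\"obner degeneration to a monomial ideal followed by the Newton-polytope/AM--GM computation is precisely how that reference establishes the bound. One small correction to your first step: the fibre-dimension bound you invoke does not come from Cohen--Macaulayness. Since $\pi_p(C)\subseteq\pi_p(X_\infty)$, you only need that the fibres of $\pi_p(X_\infty)\to\pi_{m-1}(X_\infty)$ have dimension at most $(p+1-m)n$, which is Denef--Loeser's estimate \cite[Lemma~4.3]{DL99} and holds for any variety; the fibres of the full truncation $X_p\to X_{m-1}$ can exceed $(p+1-m)n$ over a singular point, CM or not.

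The missing idea is in the singular Cohen--Macaulay case, which you correctly flag as the crux but address only vaguely via Cohen's structure theorem. The paper does not try to push the dimension estimate $mn-\dim(V(\fa))_{m-1}\ge mn/(n!\,\ell)^{1/n}$ through a finite cover; instead it replaces the raw dimension count by a valuation-theoretic invariant, the Mather log canonical threshold. First, the decomposition of $\Cont^{\ge m}(\fa)$ into maximal divisorial sets together with the identity $\jetcodim C_X^q(E)=q\,\^a_E(X)$ gives $\jetcodim(\Cont^{\ge m}(\fa))\ge m\,\^\lct(\fa)$ (Proposition~\ref{p:Mat-lct-jets}). Second, the inequality $(n!\,\ell(R/\fa))^{1/n}\,\^\lct(\fa)\ge n$ (Theorem~\ref{t:e-Mather-lct}) is proved by a generic-projection argument: embed $X\hookrightarrow\A^N$, project the zero-dimensional scheme $Z_m=V(\fa^m)$ to $\A^1$ via a general linear map $\phi$, and compare Mather log discrepancies along divisors over $X$ with ordinary log discrepancies over $\A^1$ via $\phi_*[Z_m]=\ell(R/\fa^m)\cdot[\phi(x)]$ (Theorem~\ref{t:projection}). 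Cohen--Macaulayness enters only here, to ensure that the finite projection $X\to\A^n$ is flat so that the cycle-theoretic bookkeeping goes through; there is no attempt to control dimensions inside the singular jet scheme directly. Your proposed route through module-finite free extensions is not obviously viable---jet functors do not preserve finiteness, so comparing $\dim(V(\fa))_{m-1}$ on $X$ with its counterpart over the regular base is delicate---and in any case differs substantially from the paper's approach.
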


The proof of this theorem requires a suitable extension of the main result
of \cite{dFEM04} to singular varieties, which we discuss next.  
Let $\fa \subseteq R$ be an $\fm$-primary ideal, where $\fm\subset R$ is a maximal ideal.
If $X$ is smooth, then the colength and the Hilbert--Samuel multiplicity of $\fa$
are related to the log canonical threshold $\lct(\fa)$ by the formulas
\begin{equation}
\label{eq:ell-lct}
(n!\.\ell(\O_X/\fa))^{1/n}\.\lct(\fa) \ge n,
\end{equation}
\begin{equation}
\label{eq:e-lct}
e(\fa)^{1/n}\.\lct(\fa) \ge n.
\end{equation}
We want to extend this result to all Cohen--Macaulay varieties. 
If $X$ is singular, then the log canonical threshold (even when it is defined)
is not the right invariant to consider. Instead, we look at the 
\emph{Mather log canonical threshold}  of the ideal \cite{Ish13}, which is defined by
\[
\^\lct(\fa) := \inf_{f,E} \frac{\ord_E(\jac_f)+1}{\ord_E(\fa)}
\]
where the infimum ranges over all birational morphisms $f \colon Y \to X$, with $Y$ smooth,
and all prime divisors $E \subset Y$, with
$\jac_f$ being the Jacobian ideal of $f$. 

\begin{thm}
\label{t:e-Mather-lct}
With the above notation, if $X$ is Cohen--Macaulay, of dimension $n$, then we have
\begin{equation}
\label{eq:ell-Mather-lct}
(n!\.\ell(\O_X/\fa))^{1/n}\.\^\lct(\fa) \ge n,
\end{equation}
\begin{equation}
\label{eq:e-Mather-lct}
e(\fa)^{1/n}\.\^\lct(\fa) \ge n.
\end{equation}
\end{thm}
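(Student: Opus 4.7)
The plan is to extend the main result of \cite{dFEM04} from smooth to Cohen--Macaulay varieties, leveraging the Mather--Jacobian formalism. First I would observe that inequality (\ref{eq:ell-Mather-lct}) follows from (\ref{eq:e-Mather-lct}) via Lech's inequality $e(\fa)\le n!\,\ell(R/\fa)$, which holds for any $\fm$-primary ideal in a Noetherian local ring of dimension $n$: once (\ref{eq:e-Mather-lct}) is known, one has $(n!\,\ell(R/\fa))^{1/n}\,\widehat\lct(\fa)\ge e(\fa)^{1/n}\,\widehat\lct(\fa)\ge n$, which yields (\ref{eq:ell-Mather-lct}). So I focus on proving (\ref{eq:e-Mather-lct}).

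Writing $\widehat\lct(\fa)=\inf_v (A^M(v)/v(\fa))$ as an infimum over divisorial valuations $v$ centered at $x$, the bound (\ref{eq:e-Mather-lct}) is equivalent to showing $e(\fa)^{1/n}\cdot A^M(v)\ge n\,v(\fa)$ for every such $v$. I would then reduce to a statement purely about $v$. The inclusion $\fa\subseteq\fa_{v(\fa)}(v)$ and the anti-monotonicity $\fa\subseteq\fa'\Rightarrow e(\fa)\ge e(\fa')$ give $e(\fa)\ge e(\fa_{v(\fa)}(v))$. Moreover, from $\fa_m(v)^p\subseteq\fa_{mp}(v)$ one obtains $\ell(R/\fa_m(v)^p)\ge\ell(R/\fa_{mp}(v))$, and dividing by $p^n$ and letting $p\to\infty$ yields $e(\fa_m(v))\ge m^n\cdot e(v)$, where $e(v):=\lim_{m\to\infty} e(\fa_m(v))/m^n$. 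Combining, $e(\fa)\ge v(\fa)^n\cdot e(v)$, and the problem reduces to the Mather analog of the Ein--Lazarsfeld--Smith inequality for valuations,
\[
e(v)^{1/n}\cdot A^M(v)\ge n,
\]
for every divisorial $v$ centered at a closed point of a Cohen--Macaulay variety.

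To prove this final inequality, I would pass to a log resolution $f\colon Y\to X$ with $v=\ord_E$ for some prime divisor $E\subset Y$. On the smooth variety $Y$, $e(v)$ admits a Rees-type expression as $(-1)^{n-1}E^n$ (when $X$ has rational singularities), with additional correction terms in the general Cohen--Macaulay case coming from the higher direct images $R^if_*\O_Y(-mE)$ and the dualizing sheaf $\omega_X$. The Mather log discrepancy $A^M(v)=\ord_E(\jac_f)+1$ enters naturally through the Jacobian divisor $\widehat K_{Y/X}$. The inequality would then be deduced either via this direct intersection-theoretic computation on $Y$, or by passing through a local Noether normalization $g\colon X\to\A^n$ and applying the smooth Ein--Lazarsfeld--Smith inequality on $\A^n$ to the restriction $v|_{\A^n}$, with the contribution of $\jac_g$ carefully accounting for the difference between the standard log discrepancy on $\A^n$ and the Mather log discrepancy $A^M(v)$ on $X$.

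The main technical obstacle is carrying out this last comparison in full generality. The Cohen--Macaulay hypothesis is essential throughout: it is what allows Lech's inequality, ensures good behavior of the dualizing sheaf $\omega_X$, and makes available the Rees-type intersection formulas for Hilbert--Samuel multiplicities on a log resolution. Once the sheaf-theoretic and intersection-theoretic bookkeeping is in place, the smooth-case inequality of \cite{dFEM04} delivers the required bound $e(v)^{1/n}\cdot A^M(v)\ge n$ and completes the argument.
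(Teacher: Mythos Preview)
Your reduction of \eqref{eq:ell-Mather-lct} to \eqref{eq:e-Mather-lct} via ``Lech's inequality $e(\fa)\le n!\,\ell(R/\fa)$'' is incorrect in the singular case. Lech's inequality reads $e(\fa)\le n!\,\ell(R/\fa)\,e(\fm_x)$, and the factor $e(\fm_x)$ cannot be dropped unless $x$ is a smooth point. For instance, take $X$ the cone over a twisted cubic (a normal Cohen--Macaulay surface) and $\fa=\fm_x$: then $e(\fm_x)=3$ while $n!\,\ell(R/\fm_x)=2$. So \eqref{eq:ell-Mather-lct} does not follow from \eqref{eq:e-Mather-lct} in the way you propose; in the paper the logic in fact runs the other way, with \eqref{eq:e-Mather-lct} obtained as the limit $m\to\infty$ of \eqref{eq:ell-Mather-lct} applied to $\fa^m$.

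More seriously, your proposed proof of \eqref{eq:e-Mather-lct} does not actually prove anything: after the (correct) reduction to $e(v)^{1/n}\,\^a_E(X)\ge n$ for divisorial $v=\ord_E$, you offer two sketches (an intersection-theoretic computation on a resolution, or a Noether normalization), acknowledge that ``the main technical obstacle is carrying out this last comparison in full generality,'' and stop. But that inequality \emph{is} the content of the theorem---it is equivalent to Theorem~\ref{t:vol-codim} for maximal divisorial sets---so you have only repackaged the statement. Neither of your two sketches is close to a proof: the Rees-type formula $e(v)=(-1)^{n-1}E^n$ you invoke already fails beyond rational singularities, and you give no mechanism for controlling the correction terms; and the Noether-normalization idea, while pointing in the right direction, needs a precise comparison of discrepancies under the finite map, which you do not supply.

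The paper's argument realizes your second idea but in a sharper form. Rather than normalizing to $\A^n$, it takes a generic linear projection $\f\colon X\to\A^1$ (the case $r=n$ of Theorem~\ref{t:projection}) and compares $\^a_E(X,cZ_m)$ with $a_G(\A^1,\tfrac{n!\,c^n}{n^n}\f_*[Z_m])$ for $Z_m=V(\fa^m)$. Since $\f_*[Z_m]=\ell(\cO_X/\fa^m)[\f(x)]$ and log canonical thresholds on $\A^1$ are trivial, Corollary~\ref{c:projection} gives $\ell(\cO_X/\fa^m)\cdot\^\lct(\fa)^n\ge m^n n^n/n!$ directly. The Cohen--Macaulay hypothesis enters not through Lech or $\omega_X$ as you suggest, but to ensure the projection $X\to\A^n$ is flat and that the pulled-back scheme $Z'$ has the expected cycle class, so that push-pull for cycles along the diagram in Theorem~\ref{t:projection} goes through.
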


The proofs of \eqref{eq:ell-lct} and \eqref{eq:e-lct} rely
on the reduction to monomial ideals via flat degeneration, 
where the inequality follows from Howard's description of log canonical 
thresholds of monomial ideals and the well-known inequality between 
arithmetic and geometric means. 
A slightly more general formulation of \eqref{eq:e-lct} 
is the key ingredient in the proof of a theorem of \cite{dFEM03}
on log canonical thresholds of generic projections.
The proof of Theorem~\ref{t:e-Mather-lct} follows the opposite 
direction: we first prove a theorem on Mather log discrepancies
of generic projections (see Theorem~\ref{t:projection} below), and then deduce
\eqref{eq:ell-Mather-lct} and \eqref{eq:e-Mather-lct} from it.

The paper is organized as follows. In the
next section we prove Theorem~\ref{t:e-Mather-lct}.
Section 3 is devoted to a discussion of volumes of graded sequences of ideals,
with emphasis on sequences associated to pseudo-valuations.
Then, in the 
last section we define the volume of a set of arcs
and prove several properties including those stated above.

\begin{ack}
The results in Section~\ref{s:discr} are
heavily inspired by our work with Lawrence Ein in \cite{dFEM03,dFEM04}, 
and it is a pleasure to thank him for many enlightening
discussions throughout these years.
We would like to thank also Rob Lazarsfeld for useful 
discussions on these topics. 
\end{ack}

\section{Mather log discrepancies}
\label{s:discr}

Let $X$ be a variety of dimension $n$ defined over an algebraically closed field of characteristic zero.
Recall that a \emph{divisor over} $X$ is a prime divisor $E$ on a normal variety $Y$, with a birational morphism
$f\colon Y\to X$. Such a divisor determines a valuation $\ord_E$ of $k(Y)=k(X)$ and as usual, we identify
two divisors over $X$ if they give the same valuation. The valuations that arise in this way are the
\emph{divisorial valuations} of $k(X)$ that have center on $X$ (recall that the center of $\ord_E$ is the closure
of $f(E)$).

Given a birational morphism $f \colon Y \to X$, with $Y$ smooth, we consider
$\jac_f := \Fitt^0(\Om_{Y/X}) \subseteq \O_Y$, the Jacobian ideal of the map. 

\begin{defi}
Given a divisor $E$ over $X$, the  \emph{Mather log discrepancy} $\^a_E(X)$ of $E$ over $X$
is defined as follows. Suppose that $f\colon Y\to X$ is a birational morphism, with $Y$ normal,
such that $E$ is a prime divisor on $Y$. After possibly replacing $Y$ by its smooth locus, we may
assume that $Y$ is smooth. If $\jac_f \subseteq \O_Y$ is the Jacobian ideal of the map,
then
\[
\^a_E(X) := \ord_E(\jac_f)+1.
\]
Given a nonzero ideal sheaf $\fa \subset \O_X$ and a number $c \ge 0$, 
we define the \emph{Mather log discrepancy} of $E$ with respect to the pair $(X,\fa^c)$
to be
\[
\^a_E(X,\fa^c) := \ord_E(\jac_f)+1 - c\.\ord_E(\fa).
\]
When $X$ is smooth, we write $a_E(X)$ and $a_E(X,\fra^c)$ instead of $\^a_E(X)$ and $\^a_E(X,\fra^c)$, respectively.
It is  clear that the definition of Mather log discrepancy 
only depends on the valuation $\ord_E$ that $E$ defines on the
function field of $X$, and not on the model $Y$. 
We say that the pair $(X,\fa^c)$ is Mather log canonical if for every $E$ as above, we have
$\^a_E(X,\fa^c)\geq 0$. The \emph{Mather log canonical threshold} of the pair $(X,\fa)$, with $\fa$ a proper nonzero ideal of $R$,
is defined by
\[
\^\lct(\fa):= \sup \{\, c \in \R_{\ge 0} \mid \text{$(X,\fa^c)$ is Mather log canonical}\, \}.
\]
\end{defi}
It is straightforward to check that this is equivalent to the definition of $\^\lct(\fra)$ given in Introduction.
We put, by convention,  $\^\lct(0)=0$ and $\^\lct(\cO_X)=\infty$.

\begin{rmk}
We refer to \cite{Ish13} for basic facts about Mather log discrepancies and Mather log canonical threshold. 
A useful fact is that if $f\colon Y\to X$ is a log resolution of $(X,\fa)$ which factors through the Nash blow-up of $X$,
then there is a divisor $E$ on $Y$ such that $\^\lct(\fa)=\frac{\^a_E(X)}{\ord_E(\fa)}$.
\end{rmk}

We will use several times the following basic fact about divisorial valuations.

\begin{lem}
\label{l:div-val}
Let $f\colon X\to Y$ be a dominant morphism of varieties. If $E$ is a divisor over $X$, then the restriction of $\ord_E$ to 
$k(Y)$ is a multiple of a divisorial valuation, that is, we can write
$$\ord_E|_{k(Y)}=q\cdot\ord_F$$
for some divisor $F$ over $Y$ and some positive integer $q$.
\end{lem}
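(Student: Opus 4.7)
The plan is to show that $w := \ord_E|_{k(Y)}$ is, up to an integer factor $q$ coming from the inclusion of value groups, a \emph{divisorial} valuation of $k(Y)$. The main tool is the classical characterization due to Zariski: a rank-one valuation $v$ on the function field of an $n$-dimensional variety, trivial on the ground field, is divisorial if and only if its residue field has transcendence degree $n-1$ over the ground field.

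First, since $\ord_E$ is discrete of rank one with value group $\Z$, the restriction $w$ has value group a subgroup of $\Z$, necessarily of the form $q\Z$ for some integer $q\ge 0$. The case $q=0$ corresponds to $w$ being trivial, i.e., to the generic point of $E$ mapping to the generic point of $Y$; this degenerate situation is implicit in the settings where the lemma is applied (or else it must be interpreted vacuously), and I would set it aside. For $q\ge 1$, I normalize by setting $v:=w/q$, a rank-one discrete valuation of $k(Y)$ with value group $\Z$, and so reduce the problem to proving that $v$ is divisorial.

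For the transcendence-degree computation I would apply Abhyankar's inequality twice. Applied to $v$ on $k(Y)$, with $\operatorname{rat.rank}(v)=1$, it yields the upper bound $\operatorname{tr.deg}_k \kappa(v) \le \dim Y - 1$. Applied to the extension of valuations $\ord_E \mid v$ along $k(X)/k(Y)$, and using that the quotient $\Z/q\Z$ of value groups is torsion and so contributes nothing to the rational rank, it yields $\operatorname{tr.deg}_{\kappa(v)} \kappa(\ord_E) \le \dim X - \dim Y$. Combining this with the known equality $\operatorname{tr.deg}_k \kappa(\ord_E) = \dim X - 1$, which holds because $\ord_E$ is divisorial on $X$, the transitivity of transcendence degrees produces the matching lower bound $\operatorname{tr.deg}_k \kappa(v) \ge \dim Y - 1$, hence equality.

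Zariski's characterization then produces a divisor $F$ over $Y$ with $v = \ord_F$, and consequently $\ord_E|_{k(Y)} = w = q\cdot \ord_F$, as desired. The main obstacle is citing the correct form of Zariski's characterization of divisorial valuations among rank-one ones (see, e.g., Zariski--Samuel, Vol.~II, Appendix~2); once that is granted, the remainder is a routine bookkeeping with transcendence degrees and value groups via Abhyankar's inequality in its single-valuation and relative forms.
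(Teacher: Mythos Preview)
Your proposal is correct and follows essentially the same route as the paper: both arguments reduce to showing that the residue field of $w=\ord_E|_{k(Y)}$ has transcendence degree $\dim Y - 1$ over $k$, obtaining the upper bound from Abhyankar's inequality and the lower bound from the relative Abhyankar-type bound $\operatorname{trdeg}_{\kappa(w)}\kappa(\ord_E)\le \dim X-\dim Y$ (the paper cites this as \cite[Chapter~VI.6, Corollary~1]{ZS60}) together with $\operatorname{trdeg}_k\kappa(\ord_E)=\dim X-1$, and then invoking the Zariski characterization of divisorial valuations (the paper cites \cite[Lemma~2.45]{KM98}). The only cosmetic difference is that you normalize $w$ by $q$ before applying the criterion, whereas the paper applies it to $w$ directly; your explicit flagging of the degenerate case $q=0$ is, if anything, more careful than the paper's ``it is clear that $w$ is not the trivial valuation.''
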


\begin{proof}
Let $v=\ord_E$ and $w=v|_{k(Y)}$. Note that $w$ is a valuation with center on $Y$, the center being the closure of the image
of the center of $v$ on $X$.
We denote by $R_v$ and $R_w$ the valuation rings corresponding to $v$ and $w$, respectively,
and by $k_v$ and $k_w$ the corresponding residue fields. 
Note that ${\rm trdeg}(k_w/k)\leq \dim(Y)$, with equality if and only if $w$
is the trivial valuation. Furthermore, $w$ is a multiple of a divisorial valuation if and only if
${\rm trdeg}(k_w/k)=\dim(Y)-1$ (see \cite[Lemma~2.45]{KM98}). On the other hand, since $v$ is a divisorial valuation,
we know that ${\rm trdeg}(k_v/k)=\dim(X)-1$. It follows from \cite[Chapter VI.6, Corollary~1]{ZS60} that
${\rm trdeg}(k_v/k_w)\leq{\rm trdeg}(k(X)/k(Y))=\dim(X)-\dim(Y)$. We conclude that ${\rm trdeg}(k_w/k)\geq\dim(Y)-1$.
Since it is clear that $w$ is not the trivial valuation, we conclude that in fact ${\rm trdeg}(k_w/k)=\dim(Y)-1$, hence
$w$ is a multiple of a divisorial valuation.
Since $w$ only takes integer values, it 
is immediate to see that the multiple is a positive integer.
\end{proof}

The next result gives an alternative way of computing Mather log discrepancies. Suppose that 
 $E$ is a prime divisor over a normal $n$-dimensional affine variety $X$.  
Given a closed immersion $X
\hookrightarrow \A^N$ 
and a general linear projection $\p \colon \A^N \to Y := \A^n$, we may write
$\ord_E|_{k(Y)} = q\.\ord_F$, for a prime divisor $F$ over $Y$ and a positive integer $q$,
by Lemma~\ref{l:div-val}.

\begin{prop}
\label{p:^k_E+1=q(k_F+1)}
With the above notation, we have
\[
\^a_E(X) = q\.a_F(Y).
\]
\end{prop}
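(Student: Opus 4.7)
The plan is to compute $\ord_E(\Fitt^0(\Omega_{X'/Y}))$ on a suitable smooth birational model in two different ways and equate the answers. After passing to higher smooth models (which preserves the Mather log discrepancies and the integer $q$), fix birational morphisms $f \colon X' \to X$ with $X'$ smooth and $E \subset X'$ a prime divisor, and $g \colon Y' \to Y$ with $Y'$ smooth and $F \subset Y'$ a prime divisor. Further blowing up $X'$ if necessary, we arrange that the rational map $X' \dashrightarrow Y'$ induced by $p \circ f$, with $p := \pi|_X$, extends to a morphism $h \colon X' \to Y'$, so $g \circ h = p \circ f$.

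\textbf{First computation.} Since $\Omega_Y$ is free of rank $n$ on $Y = \A^n$, the natural map $(g \circ h)^* \Omega_Y \to \Omega_{X'}$ is a map of locally free $\O_{X'}$-modules of rank $n$ that factors through $h^* \Omega_{Y'}$. Taking $n\times n$ determinants yields
\[
\Fitt^0(\Omega_{X'/Y}) \ = \ h^* \jac_g \. \Fitt^0(\Omega_{X'/Y'}).
\]
The hypothesis $\ord_E|_{k(Y)} = q \. \ord_F$ means $E$ appears in $h^* F$ with multiplicity $q$, so $\ord_E(h^* \jac_g) = q \. \ord_F(\jac_g) = q(a_F(Y) - 1)$. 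Moreover $h$ is generically \'etale between smooth $n$-folds sending $E$ dominantly onto $F$ with ramification index $q$; tame ramification in characteristic zero gives $\ord_E(\Fitt^0(\Omega_{X'/Y'})) = q - 1$. Summing,
\[
\ord_E(\Fitt^0(\Omega_{X'/Y})) \ = \ q \. a_F(Y) - 1.
\]

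\textbf{Second computation.} Working locally on $X'$ in regular parameters $t_1, \dots, t_n$, let $J = (\partial_{t_j}(f^* x_i))$ be the $N \times n$ Jacobian matrix of $f$ regarded as a map into $\A^N$. The maximal minors $\det(J_I)$, $I \subseteq \{1, \dots, N\}$ with $|I| = n$, generate $\jac_f = \Fitt^0(\Omega_{X'/X})$, so $\ord_E(\jac_f) = \min_I \ord_E(\det(J_I)) = \^a_E(X) - 1$. Writing the projection $\pi$ as a matrix $A \in k^{n \times N}$, we have $\Fitt^0(\Omega_{X'/Y}) = (\det(AJ))$, and the Cauchy--Binet formula gives
\[
\det(AJ) \ = \ \sum_{|I|=n} \det(A_I) \. \det(J_I).
\]
Writing $\det(J_I) = u_I \. \tau_E^{m_I}$ in the DVR $\O_{X', E}$ (with $\tau_E$ a uniformizer) and setting $m := \min_I m_I$, the order of this sum drops below $m$ only when the residues $\sum_{m_I = m} \det(A_I) \. \bar u_I$ vanish in $\kappa(E)$. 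This is a single polynomial condition on the entries of $A$, and it is nontrivial---as seen by specializing $A$ to the matrix whose nonzero columns are indexed by any one $I_0$ with $m_{I_0} = m$. Hence for a general projection $\pi$, $\ord_E(\Fitt^0(\Omega_{X'/Y})) = m = \^a_E(X) - 1$.

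Equating the two computations yields $\^a_E(X) = q \. a_F(Y)$. The main obstacle is the nondegeneracy check in the second computation: one must show that a generic linear projection avoids the hypersurface along which leading-term cancellation in the Cauchy--Binet expansion strictly increases $\ord_E$, via the specialization argument above. The first computation, by contrast, is essentially the chain rule for Jacobians of compositions.
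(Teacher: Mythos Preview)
Your proof is correct and follows essentially the same route as the paper's: both compute $\ord_E(\jac_{X'\to Y})$ in two ways---once via the factorization $X'\to Y'\to Y$ (giving $(q-1)+q(a_F(Y)-1)$) and once directly as a generic linear combination of the maximal minors of the Jacobian of $X'\to\A^N$ (giving $\^a_E(X)-1$)---with your write-up making the Cauchy--Binet expansion and the specialization argument for genericity more explicit than the paper does. (One small slip: where you write ``the order of this sum drops below $m$'' you mean ``exceeds $m$''.)
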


\begin{proof}
Consider a commutative diagram
\[
\xymatrix{
X' \ar[d]_g \ar[r]^f & X \ar[d] \ar@{^(->}[r] & \A^N \ar[d]^\p \\
Y' \ar[r] & Y \ar@{=}[r] & \A^n 
}
\]
where $X' \to X$ and $Y' \to Y$ are resolutions such that $E$ is a divisor on $X'$
and $F$ is a divisor on $Y'$. Note that $\ord_E(g^*F) = q$ and $\ord_E(K_{X'/Y'}) = q-1$. 
Denoting by $h \colon X' \to Y$ the composition of $f$ with the projection to $Y$, 
we have $\ord_E(K_{X'/Y}) = \ord_E(\jac_h)$. 
If $x_1,\dots,x_n$ is a regular system of parameters in $X'$ centered at a general point of $E$
and $y_1,\dots,y_N$ are affine coordinates in $\A^N$, 
then $f$ is locally given by equations $y_i = f_i(x_1,\dots,x_n)$, and 
$\Jac_f$ is locally defined by the $n\times n$ minors of the matrix
$(\de f_i/\de x_j)$. For a linear projection $\p \colon \A^N \to Y = \A^n$, 
$\jac_h$ is locally defined by a linear combination of 
the $n\times n$ minors of $(\de f_i/\de x_j)$. 
If the projection is general, then so is the linear combination, and we have
$\^a_E(X) = \ord_E(K_{X'/Y})+1$.
Writing $K_{X'/Y} = K_{X'/Y'} + K_{Y'/Y}$, we get
\[
\^a_E(X) = \ord_E(K_{X'/Y'}) + \ord_E(g^*K_{Y'/Y}) + 1 = q\.a_F(Y).
\]
\end{proof}

The following theorem is a generalization of \cite[Theorem~1.1]{dFEM03} to Cohen--Macaulay varieties. 

\begin{thm}
\label{t:projection}
Let $X \subseteq \A^N$ be a Cohen--Macaulay variety of dimension $n$,
and $E$ a divisor over $X$.
For some $1 \le r \le n$, consider the morphism
\[
\f \colon X \to \A^{n-r+1} 
\]
induced by restriction of a general linear projection $\s \colon \A^N \to \A^{n-r+1}$.
Write $\ord_E|_{k(\A^{n-r+1})} = q \cdot \ord_G$, where $G$ is a prime divisor over $\A^{n-r+1}$ 
and $q$ is a positive integer (cf.\ Lemma~\ref{l:div-val}). 
Let $Z \hookrightarrow X$ a closed Cohen--Macaulay subscheme of pure codimension $r$
such that $\f|_Z$ is a finite morphism.
Note that $\f_*[Z]$ is a cycle of codimension one in $\A^{n-r+1}$;
we regard $\f_*[Z]$ as a Cartier divisor on $\A^{n-r+1}$. 
Then, for every $c\in\R_{\geq 0}$ such that $\^a_E(X,cZ)\geq 0$, we have
\begin{equation}
\label{t:eq:1}
q\. a_G\(\A^{n-r+1},\frac {r!\,c^r}{r^r}\.\f_*[Z]\) \le \^a_E(X,cZ).
\end{equation}
Moreover, if the ideal defining $Z$ in $X$ is locally generated by a regular sequence, then
\begin{equation}
\label{t:eq:2}
q\. a_G\(\A^{n-r+1},\frac {c^r}{r^r}\.\f_*[Z]\)  \le \^a_E(X,cZ).
\end{equation}
\end{thm}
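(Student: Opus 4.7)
The plan is to factor the projection $\varphi$ through $\A^n$ and thereby reduce the statement on the (possibly singular) Cohen--Macaulay variety $X$ to its smooth analogue on $\A^n$, which is the setting of \cite{dFEM03}. The bridge between Mather log discrepancies on $X$ and classical log discrepancies on the smooth intermediate $\A^n$ will be provided by Proposition~\ref{p:^k_E+1=q(k_F+1)}.

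First, I would arrange the general linear projection $\sigma\colon\A^N\to\A^{n-r+1}$ to factor as $\sigma=\tau\circ\pi$, where $\pi\colon\A^N\to\A^n$ and $\tau\colon\A^n\to\A^{n-r+1}$ are themselves general linear projections. Setting $Y:=\A^n$ and $f:=\pi|_X\colon X\to Y$, the map $f$ is finite (by the genericity of $\pi$) and flat (by miracle flatness, since $X$ is Cohen--Macaulay and $Y$ is smooth of the same dimension); hence $\varphi=\tau\circ f$. Applying Lemma~\ref{l:div-val} twice gives $\ord_E|_{k(Y)}=q_1\cdot\ord_F$ for some divisor $F$ over $Y$, and $\ord_F|_{k(\A^{n-r+1})}=q_2\cdot\ord_G$, with $q=q_1q_2$. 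Proposition~\ref{p:^k_E+1=q(k_F+1)}, applied to the projection $\pi$, then gives $\^a_E(X)=q_1\cdot a_F(Y)$.

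Next, I would transfer $Z$ to a subscheme $W\subset Y$ whose underlying cycle equals $f_*[Z]$. Since $f$ is finite flat and $Z$ is Cohen--Macaulay of pure codimension $r$, the sheaf $f_*\O_Z$ is a Cohen--Macaulay $\O_Y$-module of codimension $r$; one may take $W$ to be defined by the $0$-th Fitting ideal of $f_*\O_Z$, so that $W\subset Y$ has pure codimension $r$, its fundamental cycle equals $f_*[Z]$, and $\tau|_W$ is finite. The crucial computation of this step is the order comparison
\[
\ord_E(\I_Z)\le q_1\cdot\ord_F(\I_W),
\]
which follows from the inclusion $f^{-1}\I_W\cdot\O_X\subseteq\I_Z$ (elements of $\I_W=\mathrm{Fitt}^0(f_*\O_Z)$ annihilate $f_*\O_Z$) combined with $\ord_E(f^*h)=q_1\cdot\ord_F(h)$ for $h\in k(Y)$. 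Combined with the formula for $\^a_E(X)$ above, this yields
\[
\^a_E(X,cZ)=q_1\cdot a_F(Y)-c\cdot\ord_E(\I_Z)\ge q_1\cdot a_F(Y,cW).
\]

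The final step is to invoke the smooth case of the theorem on $Y=\A^n$. Since $\^a_E(X,cZ)\ge 0$ forces $a_F(Y,cW)\ge 0$, the smooth projection theorem of \cite{dFEM03} applied to $W\subset\A^n$ and $\tau$ gives
\[
q_2\cdot a_G\Bigl(\A^{n-r+1},\tfrac{r!\,c^r}{r^r}\,\tau_*[W]\Bigr)\le a_F(Y,cW),
\]
and since $\tau_*[W]=\tau_*f_*[Z]=\varphi_*[Z]$, multiplying by $q_1$ and using $q=q_1q_2$ delivers \eqref{t:eq:1}. For the refinement \eqref{t:eq:2}, when $\I_Z$ is locally generated by a regular sequence the subscheme $W$ is itself a local complete intersection in $Y$: the Fitting ideal $\I_W$ can be described as generated by the norms (i.e.\ determinants acting on $f_*\O_X$) of the regular generators, so the order comparison above sharpens to the equality $\ord_E(\I_Z)=q_1\cdot\ord_F(\I_W)$, after which one invokes the stronger form of the smooth projection theorem for l.c.i.\ subschemes with improved constant $c^r/r^r$. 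The main obstacle is the order comparison at the heart of the second step; this is where the Cohen--Macaulay hypothesis on $X$ really enters, through the flatness of $f$ and the description of the pushforward via Fitting ideals.
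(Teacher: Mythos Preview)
Your strategy of factoring through $Y=\A^n$ via a general linear projection and reducing to the smooth case is natural and parallels the paper's setup, but the step where you transport $Z$ to a subscheme $W\subset Y$ has a genuine gap. The claim that $W:=V(\Fitt^0(f_*\O_Z))$ satisfies $[W]=f_*[Z]$ is false whenever $(f_*\O_Z)_\eta$ is not a cyclic $\O_{Y,\eta}$-module at a generic point $\eta$ of $f(Z)$ with $r\ge 2$. Concretely, take $Z$ to be the disjoint union of two reduced subvarieties mapping isomorphically onto the same image under $f$ (this can be arranged once $\deg f=\deg X\ge 2$, since $Z$ is chosen after the projection); then $(f_*\O_Z)_\eta\cong k(\eta)^{\oplus 2}$, and over the regular local ring $\O_{Y,\eta}$ of dimension $r$ one computes $\Fitt^0(k(\eta)^{\oplus 2})=\fm_\eta^2$, so the coefficient of $[W]$ at $\eta$ is $r+1>2$. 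Thus in general $[W]\gneq f_*[Z]$, hence $\tau_*[W]\ge\varphi_*[Z]$; but then $a_G(\,\cdot\,,\lambda\,\tau_*[W])\le a_G(\,\cdot\,,\lambda\,\varphi_*[Z])$, and your chain $\^a_E(X,cZ)\ge q\,a_G(\,\cdot\,,\tau_*[W])$ does not bound $q\,a_G(\,\cdot\,,\varphi_*[Z])$ from above. A further issue is that you never verify $W$ is Cohen--Macaulay, which the smooth case of the theorem also requires. For \eqref{t:eq:2}, the claim that norms of a regular sequence generating $I_Z$ form a regular sequence generating $I_W$, and that the order comparison sharpens to an equality, is not substantiated.

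The paper avoids these problems by a different mechanism. Rather than pushing $Z$ forward to $U=\A^n$, it base-changes the whole diagram along a resolution $h\colon V'\to V=\A^{n-r+1}$ on which $G$ is an honest prime divisor, obtaining $X'\to U'\to V'$. Setting $Z'=f^{-1}(Z)\subset X'$ and $Z''=\psi'(Z')\subset U'$ (scheme-theoretic image), intersection theory (\cite[Proposition~7.1 and Example~17.4.1]{Ful98}) yields $h^*\varphi_*[Z]=\varphi'_*[Z']\ge\gamma'_*[Z'']$, and the key geometric observation is that the center $C$ of $\ord_F$ on $U'$ is an irreducible component of $Z''$. This allows a direct application of the local length bound \cite[Theorem~2.1]{dFEM03} at $C$, which needs no Cohen--Macaulay hypothesis on $Z''$ and produces exactly the coefficient $\ord_G(\varphi_*[Z])$ through the cycle comparison. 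For \eqref{t:eq:2} the paper does not attempt to sharpen the order comparison; it instead applies \eqref{t:eq:1} to the thickenings $Z_m=V(I_Z^m)$, which remain Cohen--Macaulay when $I_Z$ is locally a regular sequence, and lets $m\to\infty$.
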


\begin{proof}
Our argument is similar to the one used in the proof of \cite[Theorem 1.1]{dFEM03}.
We assume that $\ord_E(Z) > 0$ (the case $\ord_E(Z) = 0$ is easier and left to the reader).
We factor $\s$ as a composition of two general linear projections 
\[
\A^N \to U=\A^n \to V = \A^{n-r+1}.
\]
By Lemma~\ref{l:div-val}, we can write $\ord_E|_{k(U)} = p\.\ord_F$ for some prime divisor $F$ over $U$ and
some positive integer $p$. Note that $p$ divides $q$.

Let $h \colon V' \to V$ be a proper, birational morphism, with $V'$ smooth,  such that $G$ is a prime divisor on $V'$.
Let $X' := V' \times_VX$ and $U' := V' \times_VU$, and consider the 
induced commutative diagram with Cartezian squares
\[
\xymatrix{
X' \ar[d]^{\ff'} \ar[r]^f \ar@/_15pt/[dd]_{\f'} & X \ar[d]_\ff \ar@/^15pt/[dd]^\f\\
U' \ar[d]^{\g'} \ar[r]^{g} & U \ar[d]_\g \\
V' \ar[r]^h & V
}.
\]
Let $Z' := f^{-1}(Z) \hookrightarrow X'$ and $Z'' := \ff'(Z') \hookrightarrow U'$, 
both defined scheme-theoretically. 
In general, we have  $Z'' \hookrightarrow g^{-1}(\ff(Z))$, but this may be a proper subscheme.
First, note that $\psi$ is a finite, flat morphism. Finiteness follows from the fact that it is induced by a generic projection,
while flatness follows from the fact that it is finite, $U$ is smooth, and $X$ is Cohen-Macaulay. Since $\gamma$ is clearly
flat (in fact, smooth), we conclude that $\f$ is flat. Therefore both $X'$ and $U'$ are varieties and $f$ and $g$ are proper, birational morphisms.
Furthermore,
the restriction $\f'|_{Z'}$ is finite by base-change, and thus both $\ff'|_{Z'}$ and $\g'|_{Z''}$ are finite.

Note that $Z'$ is a closed subscheme of ${\psi'}^{-1}(Z'')$, hence
\begin{equation}
\label{eq1}
p\.\ord_F(Z'') = \ord_E((\ff')^{-1}(Z'')) \ge \ord_E(Z') = \ord_E(Z).
\end{equation}

Since $h$, being a morphism between two smooth varieties, 
is a locally complete intersection morphism, it follows by flat base change that 
$f$ is a locally complete intersection morphism as well.
More explicitly, $h$ factors as $h = h_1 \o h_2$
where $h_1 \colon V' \times V \to V$ is the projection and $h_2 \colon V' \inj V' \times V$
is the regular embedding given by the graph of $h$.   
By pulling back, we get a decomposition $f = f_1 \o f_2$
where $f_1 \colon V' \times X \to X$ is smooth and $f_2 \colon X' \inj V' \times X$
is a regular embedding of codimension equal to $\dim V=\dim V'$. 
Recall that the pull-back $f^*[Z]\in A_{n-r}(X')$ is defined as $f_2^![V' \times Z]$
(see \cite[Section 6.6]{Ful98}). 

We now show that $Z'$ is pure-dimensional, of the same dimension as $Z$, and $f^*[Z]$ is equal to the class of $[Z']$ in $A_{n-r}(X')$.
Since $\f'|_{Z'}$ is finite and $\f'(Z')$ is a proper subset of $V'$,
we see that $\dim Z' \le \dim V' - 1=n-r$. On the other hand, $Z'$ is locally cut out in 
$V' \times Z$ by $\dim V'$ equations, hence every irreducible component of $Z'$ 
has dimension at least $\dim Z=n-r$. Therefore $Z'$ is pure dimensional, of 
dimension $\dim Z$. Since $V'\times X$ is Cohen-Macaulay, it follows from \cite[Proposition 7.1]{Ful98}
that $f^*[Z]=[Z']$ in $A_{n-r}(X')$.

Since $\ff'|_{Z'} \colon Z' \to Z''$ is a finite, dominant  morphism of schemes, 
we see that $Z''$ is also pure dimensional of the same dimension as $Z'$, and $\ff'_*[Z'] \ge [Z'']$.
Note that $h^*\f_*[Z]$ and $\f'_*[Z']$ are divisors on $V'$. 
Since $f$ and $h$ are locally complete intersection morphisms of the same codimension, 
and since we have seen that $f^*[Z]=[Z']$ in $A_{n-r}(X')$, it follows from
\cite[Example~17.4.1]{Ful98} that $h^*\f_*[Z] \sim \f'_*[Z']$
(note that while $\f$ and $\f'$ are not proper morphisms, they are proper when restricted to the supports of
$Z$ and $Z'$, respectively). 
Since the two divisors are equal away from the 
exceptional locus of $h$, we deduce that $h^*\f_*[Z] = \f'_*[Z']$
by the Negativity Lemma (see~\cite[Lemma~3.39]{KM98}). We thus conclude that
\[
h^*\f_*[Z] =  \f'_*[Z'] \ge \g'_*[Z''].
\]
On the other hand, the center $C$ of $\ord_F$ in $U'$ is contained in $Z''$
and dominates $G$. Since $\f'|_{Z'}$ is finite, it follows that the map $\g'|_C \colon C \to G$ is finite. 
In particular, we have $\dim(C)=\dim(G)=n-r=\dim(Z'')$, hence 
$C$ is an irreducible component of $Z''$. 
Therefore we have
\begin{equation}
\label{eq2}
\ord_G(\f_*[Z]) = \ord_G(h^*\f_*[Z]) \ge \ord_G(\g'_*[Z'']) \ge e_C([Z'']) = \ell(\O_{Z'',C}).
\end{equation}

Let $b := k_G(V)$ denote the discrepancy of $G$ over $V$, 
and let $H := (\g')^*G$. Note that 
$p\.\val_F(H) = q$ and since $\gamma'$ is smooth, 
$H$ is a smooth divisor at the generic point of $C$. Moreover, since $K_{U'/U} = (\g')^*K_{V'/V}$, we have
$K_{U'/U} = b H + R$, where $R$ is a divisor that does not contain $C$ in its support.
Then, by Proposition~\ref{p:^k_E+1=q(k_F+1)} and equation \eqref{eq1}, we see that
\[
\^a_E(X,cZ) \ge p\. a_F(U',cZ'' - K_{U'/U}) = p\.a_F(U',cZ'' - b H).
\]
Setting $\a := \^a_E(X,cZ)/q$, we have 
$$
a_F(U', cZ'' - (b- \a)H)=a_F(U',cZ''-bH)-\a\cdot\ord_F(H)\leq \a(1-\ord_F(H)) \le 0,
$$
where the last inequality follows from the fact that $\ord_F(H)\geq 1$ and, by assumption, $\a\geq 0$. 
This in turn implies 
\begin{equation}
\label{eq3}
\ell(\O_{Z'',C}) \ge \frac{(b -\a + 1)r^r}{r!\,c^r}.
\end{equation}
Indeed, if $b - \a \ge 0$, then
\eqref{eq3} follows by \cite[Theorem~2.1]{dFEM03}. The case $b- \a < 0$ is easier, 
and follows from \cite[Lemma~2.4]{dFEM03} using the same degeneration to monomial ideals
(see \cite[Section~2]{dFEM04}). 

Combining \eqref{eq2} and \eqref{eq3}, we get
\[
q\. a_G\(V,\frac {r!\,c^r}{r^r}\.\f_*[Z]\)=q\cdot a_G(V)-\frac {r!\,c^r}{r^r}\cdot\ord_G(\f_*[Z]) \le q(b+ 1 - (b -\a + 1)) = \^a_E(X,cZ),
\]
as stated in \eqref{t:eq:1}

Suppose now that the ideal of $Z$ in $X$ is locally generated by a regular sequence. 
If $I_Z \subseteq \O_X$ is the 
ideal sheaf of $Z$ and $Z_i$ is an irreducible component of $Z$, then 
\begin{equation}
\label{eq:4}
\ell(\O_{Z,Z_i}) = e(I_Z\O_{X,Z_i}) = \lim_{m \to \infty} 
\frac{r!}{m^r}\.\ell(\O_{X,Z_i}/I_Z^m\O_{X,Z_i}).
\end{equation}

For every $m$, let $Z_m \hookrightarrow X$ be the subscheme defined by $I_Z^m$. 
Since $I_Z$ is locally generated by a regular sequence, $I_Z^{m}/I_Z^{m+1}$ is a locally free $\O_Z$-module, 
and thus is Cohen--Macaulay (as an $\O_Z$-module, hence as an $\O_X$-module). Note that
$\O_Z$ is also Cohen--Macaulay (as an $\O_Z$-module, hence as an $\O_X$-module).
By applying \cite[Proposition~1.2.9]{BH93} to the exact sequences of $\O_X$-modules
\[
0 \to I_Z^{m}/I_Z^{m+1} \to \O_{Z_{m+1}} \to \O_{Z_m} \to 0,
\]
we see by induction that $\O_{Z_m}$ is a Cohen--Macaulay $\O_X$-module, and therefore
$Z_m$ is a Cohen--Macaulay scheme. Note that
\[
\^a_E\(X,\frac cm\. Z_m\) = \^a_E(X,cZ) \quad\text{for all}\,\, m, 
\]
and 
\[
\lim_{m\to\infty} \frac{r!}{m^r}\.[Z_m] = [Z]
\]
by \eqref{eq:4}. 
Since $\f|_{Z_m}$ if finite for every $m$, we may apply \eqref{t:eq:1} 
with $(Z,c)$ replaced by $(Z_m,c/m)$ to deduce, after letting
$m$ go to infinity, the inequality in 
\eqref{t:eq:2}.
\end{proof}

\begin{cor}
\label{c:projection}
With the same assumptions as in the first part of Theorem~\ref{t:projection}, we have
\begin{equation}
\label{eq:proj1}
\lct(\A^{n-r+1},\f_*[Z]) \le \frac{\^\lct(X,Z)^r}{r^r/r!}. 
\end{equation}
Moreover, if the ideal of $Z$ in $X$ is locally generated by a regular sequence, then 
\begin{equation}
\label{eq:proj2}
\lct(\A^{n-r+1},\f_*[Z]) \le \frac{\^\lct(X,Z)^r}{r^r}. 
\end{equation}
\end{cor}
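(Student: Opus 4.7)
The plan is to deduce the corollary from Theorem \ref{t:projection} by applying it to a single well-chosen divisor $E_0$ over $X$ that computes the Mather log canonical threshold $\^c := \^\lct(X, Z)$, and then reading off an upper bound on $\lct(\A^{n-r+1}, \f_*[Z])$ from the associated divisor $G_0$ over $\A^{n-r+1}$ provided by Lemma \ref{l:div-val}.

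First, I invoke the Remark recorded after the definition of $\^\lct$: on a log resolution of $(X, I_Z)$ factoring through the Nash blow-up of $X$, there is a prime divisor $E_0$ realizing the infimum, so that $\^a_{E_0}(X) = \^c \. \ord_{E_0}(Z)$, or equivalently $\^a_{E_0}(X, \^c\, Z) = 0$. Note that $\ord_{E_0}(Z) > 0$ (else $E_0$ would contribute $+\infty$ to the infimum defining $\^\lct$) and $\^c > 0$ because $\^a_{E_0}(X) \ge 1$, so the non-negativity hypothesis of Theorem \ref{t:projection} holds with equality at $c = \^c$ and $E = E_0$. Writing $\ord_{E_0}|_{k(\A^{n-r+1})} = q \. \ord_{G_0}$ as in Lemma \ref{l:div-val}, inequality \eqref{t:eq:1} then gives
\[
q \. a_{G_0}\Bigl(\A^{n-r+1}, \tfrac{r!\,\^c^r}{r^r}\,\f_*[Z]\Bigr) \le \^a_{E_0}(X, \^c\, Z) = 0.
\]

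The proof of Theorem \ref{t:projection} itself produces, en route, the lower bound $\ord_{G_0}(\f_*[Z]) \ge \ell(\O_{Z'', C}) \ge 1$, so the affine function $\lambda \mapsto a_{G_0}(\A^{n-r+1}, \lambda\,\f_*[Z])$ is strictly decreasing in $\lambda$. Combined with the previous display this forces
\[
\lct(\A^{n-r+1}, \f_*[Z]) \le \frac{a_{G_0}(\A^{n-r+1})}{\ord_{G_0}(\f_*[Z])} \le \frac{r!\,\^c^r}{r^r} = \frac{\^\lct(X, Z)^r}{r^r/r!},
\]
which is \eqref{eq:proj1}. The second inequality \eqref{eq:proj2} follows by the same argument verbatim, using \eqref{t:eq:2} in place of \eqref{t:eq:1} to drop the factor $r!$ from the numerator. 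The main (and really only) obstacle is the choice of a divisor $E_0$ that simultaneously computes $\^\lct(X, Z)$ and makes Theorem \ref{t:projection} tight on its right-hand side; this is precisely what the Remark supplies, and the remainder of the argument is a formal translation between Mather log discrepancies and log canonical thresholds.
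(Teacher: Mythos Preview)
Your proof is correct and follows the same approach as the paper's: choose a divisor $E_0$ computing $\^\lct(X,Z)$ and apply Theorem~\ref{t:projection} with $c=\^\lct(X,Z)$, so that the right-hand side vanishes and the left-hand side forces the desired bound on $\lct(\A^{n-r+1},\f_*[Z])$. The paper records this in a single sentence, while you spell out the intermediate step of checking $\ord_{G_0}(\f_*[Z])>0$; note that this last point can be seen more directly (without reaching into the proof of Theorem~\ref{t:projection}) from the fact that $\ord_{E_0}(Z)>0$ forces the center of $E_0$ to lie in $Z$, hence the center of $G_0$ lies in $\f(Z)=\Supp(\f_*[Z])$.
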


\begin{proof}
We apply Theorem~\ref{t:projection} for a divisor $E$ computing $\^\lct(X,Z)$. 
\end{proof}

We apply the first part of the corollary to prove Theorem~\ref{t:e-Mather-lct}. 

\begin{proof}[Proof of Theorem~\ref{t:e-Mather-lct}]
Let $x\in X$ be the cosupport of $\fa$.
After replacing $X$ by an affine neighborhood of $x$, we may assume that 
we have a closed immersion $X\hookrightarrow\A^N$.
Let $m\geq 1$ be fixed and
$Z_m \hookrightarrow X$ be the zero-dimensional scheme defined by $\fa^m$.
Note that $Z_m$ is Cohen--Macaulay, since it is zero dimensional.
 
Consider a general linear projection $\A^N \to \A^1$ and let $\f \colon X \to \A^1$
be the induced map. Note that 
\[
\^\lct(X,Z_m) = \frac 1m \.\^\lct(X,Z), 
\]
and since
\[
\f_*[Z_m] = \ell(\O_X/\fa^m)\.[f(x)], 
\]
we have 
\[
\lct(\A^1,\f_*[Z_m]) = \frac 1{\ell(\O_X/\fa^m)}.
\]
Then \eqref{eq:proj1} gives
\[
\frac{\ell(\O_X/\fa^m)}{m^n/n!} \. \^\lct(X,Z)^n \ge n^n.
\]
Setting $m = 1$ and taking $n$-th roots, we get \eqref{eq:ell-Mather-lct}.
The formula \eqref{eq:e-Mather-lct} follows by taking the limit as $m$ goes to infinity
and then taking $n$-th roots.
\end{proof}

\section{The volume of a graded sequence of ideals}

We recall, following \cite{ELS03} and \cite{Mus02a}, some basic facts about the volume of a graded sequence of ideals.
Let $k$ be an algebraically closed field of arbitrary characteristic and let $X=\Spec(R)$ be an $n$-dimensional affine variety over $k$
(in particular, we assume that $R$ is a domain). 
Recall that a sequence $\fa_\bullet = (\fa_m)_{m \ge 0}$ of ideals $\fa_m \subseteq R$ is
a graded sequence of ideals if $\fa_0 = R$ and $\fa_p\.\fa_q \subseteq \fa_{p+q}$ for every $p,q \ge 1$.

\begin{defi}
The \emph{volume} of a graded sequence $\fa_{\bullet}$ is defined by
\[
\vol(\fa_{\bullet}):=\limsup_{m\to\infty}\frac{\ell(R/\fa_m)}{m^n/n!}.
\]
\end{defi}


Let $\fra_{\bullet}$ be a graded sequence of ideals in $R$. The main case for understanding the notion of volume 
is that when there is a closed point $x$ in $X$ such that for every $m\geq 1$, the cosupport of $\fra_m$ is equal to $\{x\}$ (we say that $\fa_{\bullet}$ is \emph{cosupported at} $x$).
Note that in this case we have $\vol(\fa_{\bullet})<\infty$. Indeed, if $N$ is a positive integer such that $\fm_x^N\subseteq\fa_1$, where $\fm_x$ is the ideal defining $x$,
then $\fm_x^{pN}\subseteq\fa_p$ for every $p\geq 1$, hence $\vol(\fa_{\bullet})\leq N^n\cdot e(\fm_x)$. In fact, under the same assumption,
it follows from 
\cite[Theorem~3.8]{LM09} that the volume of $\fra_{\bullet}$ can be computed 
as a limit of normalized Hilbert-Samuel multiplicities. More precisely, we have
\begin{equation}
\label{eq:vol=lim-e}
\vol(\fa_{\bullet}) = \lim_{m\to\infty}\frac{e(\fa_m)}{m^n}.
\end{equation}
Moreover, the limit superior in the definition of volume is a limit
\begin{equation}\label{eq_limit_in_volume}
\vol(\fa)=\lim_{m\to\infty}\frac{\ell(R/\fa_m)}{m^n/n!}
\end{equation}
by \cite[Theorem~1]{Cut14}. 

\begin{rmk}\label{vol_as_inf_e}
Suppose that $\fra_{\bullet}$ is a graded sequence of ideals such that $\fra_p\subseteq\fra_q$ whenever $p\geq q$.
If $\fra_{\bullet}$ is cosupported at a point $x\in X$, then
$$\vol(\fra_{\bullet})=\inf_{m\geq 1}\frac{e(\fa_m)}{m^n}.$$
Indeed, this is a consequence of (\ref{eq:vol=lim-e}) and of the fact that
$$\lim_{m\to\infty}\frac{e(\fa_m)}{m^n}=\inf_{m\geq 1}\frac{e(\fa_m)}{m^n}.$$
This equality is a consequence of Lemma~\ref{lem2} below.
\end{rmk}

\begin{rmk}\label{ideals_cosupported_finite set}
Suppose that $\fa_{\bullet}$ is a graded sequence of ideals and $\Gamma=\{x_1,\ldots,x_r\}$ is a finite set of
closed points in $X$ such that for every $m\geq 1$, the ideal $\fa_m$ has cosupport $\Gamma$. For every $m\geq 1$,
let us consider the primary decomposition 
$$\fa_m=\bigcap_{i=1}^r\fa_m^{(i)},$$ where each $\fa_m^{(i)}$ is an ideal with cosupport $\{x_i\}$. It is clear that each $\fa^{(i)}_{\bullet}$
is a graded sequence of ideals. Since
\begin{equation}\label{eq1_ideals_cosupported_finite set}
\ell(R/\fa_m)=\sum_{i=1}^r\ell(R/\fa_m^{(i)}),
\end{equation}
we deduce 
\begin{equation}\label{eq2_ideals_cosupported_finite set}
\vol(\fa_{\bullet})=\sum_{i=1}^r\vol(\fa_{\bullet}^{(i)}).
\end{equation}
 In particular, we see that $\vol(\fra_{\bullet})<\infty$ and the assertion in 
(\ref{eq_limit_in_volume}) also holds for $\fra_{\bullet}$.
\end{rmk}

\begin{eg}\label{volume_integral_closure}
Suppose that $\fra_{\bullet}$ is a graded sequence of ideals such that each $\fra_m$, with $m\geq 1$, has cosupport equal to a finite set $\Gamma$.
If $\overline{\fra}_{\bullet}$ is such that $\overline{\fra}_m$ is the integral closure of the ideal $\fa_m$, then $\overline{\fra}_{\bullet}$ is a graded sequence
and $\vol(\overline{\fra}_{\bullet})=\vol(\fra_{\bullet})$. The first assertion follows from the fact that $\overline{\fra}_p\cdot\overline{\fra}_q$
is contained in the integral closure of $\fra_p\cdot\fra_q$, hence in $\overline{\fra}_{p+q}$. In order to see that $\vol(\fra_{\bullet})=\vol(\overline{\fra}_{\bullet})$,
we may assume that all $\fra_m$ have cosupport at the same point $x\in X$ (see Remark~\ref{ideals_cosupported_finite set}). In this case, since
$e(\fra_m)=e(\overline{\fra}_m)$ for every $m$, the assertion follows from (\ref{eq:vol=lim-e}).
\end{eg}

Under a mild condition on $\fra_{\bullet}$
which is often satisfied, 
we give in the next proposition a new easy proof of the assertions (\ref{eq:vol=lim-e}) and (\ref{eq_limit_in_volume})
 in the smooth case.

\begin{prop}\label{prop1}
Suppose that $X=\Spec(R)$ is smooth. If
$\fa_{\bullet}$ is a graded sequence of ideals in $R$ which is cosupported at a point in $X$,
and $\fa_p\subseteq\fa_q$ whenever $p\geq q$, then
\begin{equation}\label{eq1_prop1}
\vol(\fa_{\bullet}) = \lim_{m\to\infty}\frac{\ell(R/\fa_m)}{m^n/n!}
=\inf_{m\geq 1}\frac{\ell(R/\fa_m)}{m^n/n!}
\end{equation}
\begin{equation}\label{eq2_prop1}
=\lim_{m\to\infty}\frac{e(\fa_m)}{m^n}
=\inf_{m\geq1}\frac{e(\fa_m)}{m^n}.
\end{equation}
\end{prop}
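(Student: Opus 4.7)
The plan is to prove all five equalities simultaneously by sandwiching the quantities in a single chain of basic inequalities. Concretely, I would establish
\begin{equation*}
\limsup_{k\to\infty}\frac{n!\,\ell(R/\fa_k)}{k^n}\ \le\ \inf_{m\ge 1}\frac{e(\fa_m)}{m^n}\ \le\ \inf_{m\ge 1}\frac{n!\,\ell(R/\fa_m)}{m^n}\ \le\ \liminf_{k\to\infty}\frac{n!\,\ell(R/\fa_k)}{k^n},
\end{equation*}
where the two outer expressions trivially satisfy $\liminf\le\limsup$. This forces every entry of the chain to equal $\vol(\fa_\bullet)$, upgrades the $\limsup$ in the definition of $\vol(\fa_\bullet)$ to a limit, and simultaneously identifies $\vol(\fa_\bullet)$ with $\inf_m n!\ell(R/\fa_m)/m^n$ and with $\inf_m e(\fa_m)/m^n$.

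The heart of the argument is the first inequality. Fix $m\ge 1$ and for $k\ge m$ write $k=mp+r$ with $0\le r\le m-1$. Combining the filtering assumption $\fa_a\supseteq\fa_b$ (for $a\le b$) with the iterated graded condition $\fa_m^{p+1}\subseteq\fa_{m(p+1)}$ gives $\fa_k\supseteq\fa_{m(p+1)}\supseteq\fa_m^{p+1}$, hence $\ell(R/\fa_k)\le\ell(R/\fa_m^{p+1})$. Since $\fa_m R_{\fm_x}$ is $\fm_x$-primary, classical Hilbert-Samuel theory yields $n!\,\ell(R/\fa_m^{p+1})/(p+1)^n\to e(\fa_m)$ as $p\to\infty$. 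Because $k/((p+1)m)\to 1$ as $k\to\infty$ with $m$ fixed, dividing through by $k^n$ yields $\limsup_k n!\,\ell(R/\fa_k)/k^n\le e(\fa_m)/m^n$ for every $m$, and taking $\inf_m$ gives the first inequality of the chain.

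The second inequality is Lech's inequality in the regular local ring $R_{\fm_x}$: every $\fm_x$-primary ideal $I$ satisfies $e(I)\le n!\,\ell(R_{\fm_x}/I)$, applied termwise to $\fa_m$. This is precisely where smoothness of $X$ enters. The third inequality is immediate, since $\inf_m$ of a sequence is a lower bound for its $\liminf$. Once the chain closes, the last equality $\lim_m e(\fa_m)/m^n=\vol(\fa_\bullet)$ follows by one more sandwich: $\liminf_m e(\fa_m)/m^n\ge\inf_m e(\fa_m)/m^n=\vol(\fa_\bullet)$ from below, while Lech combined with the already-established limit $\lim_m n!\ell(R/\fa_m)/m^n=\vol(\fa_\bullet)$ gives $\limsup_m e(\fa_m)/m^n\le\vol(\fa_\bullet)$ from above. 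I expect the main obstacle to be the careful asymptotic bookkeeping in the first inequality, namely verifying that the remainder $r$ in the division $k=mp+r$ is asymptotically negligible in the ratio $(mp+r)^n/((p+1)m)^n$ as $k\to\infty$ with $m$ fixed; this is routine but easy to mishandle if one is not careful about the order of limits.
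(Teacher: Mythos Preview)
Your proof is correct and takes a genuinely different route from the paper's. The paper proves two auxiliary lemmas: Lemma~\ref{lem1}, which shows via reduction to monomial ideals that $\ell(R/\fa)\ge p^{-n}\ell(R/\fa^p)$ in the smooth case, and Lemma~\ref{lem2}, a Fekete-type statement that a nondecreasing sequence satisfying $\alpha_{pq}\le p\,\alpha_q$ has $\lim_m\alpha_m/m=\inf_m\alpha_m/m$. These are combined (applied, in effect, to $\alpha_m=\ell(R/\fa_m)^{1/n}$ and to $e(\fa_m)^{1/n}$) to obtain the equalities. Your argument bypasses both lemmas: the role of Lemma~\ref{lem1} is played by the classical Lech inequality $e(I)\le n!\,\ell(R/I)$ in a regular local ring (which is exactly the $p\to\infty$ limit of Lemma~\ref{lem1}), and the role of Lemma~\ref{lem2} is absorbed into your direct sandwich, where the Hilbert--Samuel asymptotic $n!\,\ell(R/\fa_m^{p+1})/(p+1)^n\to e(\fa_m)$ does the work of bounding the $\limsup$. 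Your approach is shorter and invokes a named classical result rather than reproving it; the paper's approach is more self-contained and in fact establishes the slightly sharper finite-level inequality $\ell(R/\fa^p)\le p^n\ell(R/\fa)$, though only the asymptotic consequence is used in the proof of the proposition.
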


Note that while the proposition recovers (\ref{eq:vol=lim-e}) and (\ref{eq_limit_in_volume}) in the smooth setting,
it also implies the equality $\vol(\fra_{\bullet})=\inf_{m\geq 1}\frac{\ell(R/\fa_m)}{m^n/n!}$, which needs the
smoothness assumption.
For the proof of the proposition we need two lemmas.
The first one is a special case of \cite[Lemma~25]{KN};  
for completeness, we include the proof of this special case.

\begin{lem}\label{lem1}
If $X=\Spec(R)$ is smooth, 
$x\in X$ is a closed point defined by $\fm_x$, and $\fa$ is an $\fm_x$-primary ideal in $R$, 
then for every $p\geq 1$, we have
\[
\ell(R/\fa)\geq\frac{1}{p^n}\.\ell(R/\fa^p).
\]
\end{lem}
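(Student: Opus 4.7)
The plan is to reduce to the case of a monomial ideal by a Gröbner degeneration, and then to exploit the freeness of $R$ over the subring of $p$-th powers of its regular parameters.

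First, since neither $\ell(R/\fa)$ nor $\ell(R/\fa^p)$ changes upon replacing $R$ with its $\fm_x$-adic completion, I would assume $R = k\llbracket x_1,\dots,x_n\rrbracket$ with $\fm_x = (x_1,\dots,x_n)$. I would then choose any local monomial order and let $I := \ini(\fa)$ denote the (monomial) initial ideal of $\fa$. Standard Gröbner-basis arguments give $\ell(R/\fa) = \ell(R/I)$, and the multiplicativity of initial terms yields $\ini(\fa^p) \supseteq \ini(\fa)^p = I^p$, whence $\ell(R/\fa^p) = \ell(R/\ini(\fa^p)) \le \ell(R/I^p)$. This reduces the problem to proving the inequality for the monomial ideal $I$.

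For the monomial case, I would use the injective ring endomorphism $\phi\colon R \to R$ given by $\phi(x_i) = x_i^p$. Its image $S := \phi(R) = k\llbracket x_1^p,\dots,x_n^p\rrbracket$ makes $R$ into a free $S$-module of rank $p^n$, with basis $\{x^\beta : 0 \le \beta_i < p\}$; since $R$ and $S$ share the same residue field $k$, flatness gives $\ell_R(R/\phi(J) R) = p^n\.\ell_R(R/J)$ for every $\fm_x$-primary ideal $J$. Writing $I = (x^{\alpha_1},\dots,x^{\alpha_r})$, we have $\phi(I) R = (x^{p\alpha_1},\dots,x^{p\alpha_r})$, and since each generator $x^{p\alpha_i} = (x^{\alpha_i})^p$ lies in $I^p$, we obtain $\phi(I)R \subseteq I^p$. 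Therefore $\ell(R/I^p) \le \ell(R/\phi(I) R) = p^n \ell(R/I)$, and chaining with the previous step yields $\ell(R/\fa^p) \le p^n \ell(R/\fa)$.

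The step that will require the most care is the Gröbner reduction to the monomial case: the flat-map argument above genuinely breaks for a general ideal, because $\phi(f) = f(x_1^p,\dots,x_n^p)$ is not the $p$-th power $f^p$ outside of characteristic $p$, so $\phi(\fa) R$ need not be contained in $\fa^p$ without the monomial assumption. The passage to monomial ideals — where $\phi$ coincides with the $p$-th power on generators — is precisely what allows the flat-map bookkeeping to go through.
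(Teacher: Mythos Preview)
Your argument is correct. The reduction to monomial ideals via initial ideals is exactly the paper's first step (the paper works in the polynomial ring $k[x_1,\dots,x_n]$ rather than the power series ring, but this is immaterial since $\fa$ is $\fm_x$-primary).

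For the monomial case, however, you and the paper take different routes. The paper argues combinatorially: it identifies $\ell(R/I)$ with the Euclidean volume of the staircase region $Q^c(I) = \R_{\ge 0}^n \setminus \bigcup_{x^u\in I}(u+\R_{\ge 0}^n)$, observes that $Q(I^p)\supseteq p\cdot Q(I)$ and hence $Q^c(I^p)\subseteq p\cdot Q^c(I)$, and concludes by the scaling of Euclidean volume. Your approach is algebraic: the endomorphism $\phi\colon x_i\mapsto x_i^p$ makes $R$ free of rank $p^n$ over $\phi(R)$, and for a monomial ideal $I$ one has $\phi(I)R\subseteq I^p$, which gives the bound directly. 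The two arguments are really the same fact in different clothing (the containment $\phi(I)R\subseteq I^p$ is precisely $p\cdot Q(I)\subseteq Q(I^p)$, and the rank-$p^n$ freeness is the lattice-point version of volume scaling), but your packaging avoids any appeal to Euclidean volume and reads as a characteristic-free imitation of a Frobenius argument. Your closing remark is apt: the monomial reduction is essential, since $\phi(f)\ne f^p$ in general.
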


\begin{proof}
Since $X$ is smooth, it is straightforward to reduce to the case when 
$X=\A^n$ and $\fra$ is an ideal supported at the origin.
We choose a monomial order on $R=k[x_1,\ldots,x_n]$ and for every ideal 
$\fb$ in $R$, we consider the initial ideal
\[
\ini(\fb)=(\ini(f)\mid f\in\fb).
\]
We refer to \cite[Chapter 15]{Eis95} for the basic facts about initial ideals.
Note that we have $\ell(R/\fb)=\ell(R/\ini(\fb))$.
It is clear that $\ini(\fa^p)\supseteq \ini(\fa)^p$. 
It follows that if we know the assertion in the lemma for 
$\ini(\fa)$, then
\[
\ell(R/\fa)=\ell(R/\ini(\fa))\geq \frac{1}{p^n}\.\ell(R/\ini(\fa)^p)
\geq \frac{1}{p^n}\.\ell(R/\ini(\fa^p))=\frac{1}{p^n}\.\ell(R/\fa^p),
\]
hence we obtain the assertion for $\fa$. 

The above argument shows that we may assume that $\fa$ is a monomial ideal. 
For every such ideal $\fa$, we consider the sets 
\[
Q(\fa):=\bigcup_{x^u\in\fa}(u+\R^n) \quad\text{and}\quad 
Q^c(\fa):=\R^n_{\geq 0}\smallsetminus Q(\fa).
\]
Note that $Q^c(\fra)$ is equal, up to a set of measure zero,
to the union of $\ell(R/\fa)$ disjoint open unit cubes. 
Therefore $\ell(R/\fa)$ is equal to $\vol(Q^c(\fra))$, the Euclidean volume of
$Q^c(\fa)$. On the other hand, it is clear from definition that 
$Q(\fa^p)\supseteq p\cdot Q(\fa)$, hence $Q^c(\fa^p)\subseteq p\cdot Q^c(\fa)$.
We thus conclude 
\[
\ell(R/\fa)=\vol(Q^c(\fa))\geq\vol\left(\frac{1}{p}\.Q^c(\fa^p)\right)
=\frac{1}{p^n}\.\vol(Q^c(\fa^p))=\frac{1}{p^n}\.\ell(R/\fa^p).
\]
This completes the proof of the lemma.
\end{proof}

The following is a variant of \cite[Lemma~2.2]{Mus02a}. 

\begin{lem}\label{lem2}
If $(\alpha_m)_{m\geq 1}$ is a sequence of non-negative real numbers that satisfies the following two conditions:
\begin{enumerate}
\item[i)] $\alpha_{pq}\leq p\cdot\alpha_q$ for every $p,q\geq 1$, and
\item[ii)] $\alpha_p\geq\alpha_q$ whenever $p\geq q$,
\end{enumerate}
then
\[
\lim_{m\to\infty}\frac{\alpha_m}{m}=\inf_{m\geq 1}\frac{\alpha_m}{m}.
\]
\end{lem}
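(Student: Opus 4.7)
The plan is to prove the standard two-sided inequality, that is, to show both
\[
\liminf_{m\to\infty}\frac{\alpha_m}{m}\ge\inf_{m\ge 1}\frac{\alpha_m}{m}
\and
\limsup_{m\to\infty}\frac{\alpha_m}{m}\le\inf_{m\ge 1}\frac{\alpha_m}{m}.
\]
The first inequality is trivial from the definition of infimum, so the only content is the limsup bound. This is a Fekete-type argument adapted to the pair of hypotheses (i) and (ii).

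First I would fix an arbitrary $q\ge 1$ and show that $\limsup_{m\to\infty}\alpha_m/m\le\alpha_q/q$. Given $m\ge 1$, write $m=pq+r$ with $0\le r<q$, so that $pq\le m\le (p+1)q$. By the monotonicity hypothesis (ii) applied to $m\le (p+1)q$, we have $\alpha_m\le\alpha_{(p+1)q}$, and then by the subadditivity-type hypothesis (i) with the roles taken as $p\mapsto p+1$, $q\mapsto q$, we get $\alpha_{(p+1)q}\le (p+1)\alpha_q$. Combining,
\[
\frac{\alpha_m}{m}\le \frac{(p+1)\alpha_q}{m}\le\frac{(p+1)\alpha_q}{pq}=\frac{\alpha_q}{q}\cdot\frac{p+1}{p}.
\]
As $m\to\infty$, the integer $p=\lfloor m/q\rfloor$ also tends to infinity (with $q$ fixed), so $(p+1)/p\to 1$, and therefore $\limsup_{m\to\infty}\alpha_m/m\le\alpha_q/q$.

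Finally I would take the infimum over $q$ on the right-hand side to conclude $\limsup_{m\to\infty}\alpha_m/m\le\inf_{q\ge 1}\alpha_q/q$. Combined with the trivial bound $\inf_{q\ge 1}\alpha_q/q\le\liminf_{m\to\infty}\alpha_m/m$, this forces the limit to exist and equal the infimum. There is no real obstacle here; the only subtle point is that one needs both hypotheses simultaneously: (i) is what gives the submultiplicativity along multiples of $q$ that is characteristic of Fekete's lemma, while (ii) is what allows one to ``round up'' an arbitrary $m$ to the next multiple of $q$ without losing control, thereby covering the residues $r$ in the Euclidean division by $q$.
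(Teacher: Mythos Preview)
Your proof is correct and follows essentially the same approach as the paper's: both round $m$ up to the nearest multiple of a fixed index using hypothesis (ii), then apply hypothesis (i) to that multiple, and finally let the ratio of the rounded index to $m$ tend to $1$. The only cosmetic differences are that the paper phrases the argument with an explicit $\epsilon$ rather than via $\limsup$, and writes the division as $m=jd-i$ with $j=\lceil m/d\rceil$ instead of $m=pq+r$ with $p=\lfloor m/q\rfloor$; these amount to the same computation.
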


\begin{proof}
Let $\lambda:=\inf_m\frac{\alpha_m}{m}$. We need to show that for every $\epsilon>0$, we have
$\frac{\alpha_m}{m}\leq\lambda+\epsilon$ for all $m\gg 1$. By definition, there is $d>0$ such that
$\frac{\alpha_d}{d}<\lambda+\frac{\epsilon}{2}$. Given $m$, we write $m=jd-i$, 
where $0\leq i<d$ (hence $j=\lceil m/d\rceil$).
The hypotheses imply
\[
\frac{\alpha_m}{m}\leq\frac{\alpha_{jd}}{jd-i}\leq \frac{\alpha_{d}}{d}\cdot\frac{jd}{jd-i}\leq \left(\lambda+\frac{\epsilon}{2}\right)\cdot \frac{jd}{jd-i}.
\]
For $m\gg 1$, we have $j\gg 1$, hence 
$\frac{jd}{jd-i}<\frac{\lambda+\epsilon}{\lambda+\frac{\epsilon}{2}}$.
This completes the proof of the lemma.
\end{proof}

\begin{proof}[Proof of Proposition~\ref{prop1}]
Let $\alpha_m=\ell(R/\fra_m)$. If $p\geq q$, then by assumption 
$\fra_p\subseteq\fra_q$, hence $\alpha_p\geq\alpha_q$. 
Moreover, it follows from Lemma~\ref{lem1} that $\alpha_{pq}\leq p\cdot\alpha_q$ for all 
$p,q\geq 1$. The two equalities in (\ref{eq1_prop1}) now follow from the definition of volume and
Lemma~\ref{lem2}.

Note now that Lemma~\ref{lem2} also gives the second equality in (\ref{eq2_prop1}). Indeed, for $p\geq q$,
we have $\fa_p\subseteq\fa_q$, hence $e(\fa_p)\geq e(\fa_q)$; moreover, the inclusion $\fa_q^p\subseteq \fa_{pq}$
implies $e(\fa_{pq})\leq e(\fa_q^p)=p^n\cdot e(\fa_q)$. In order to prove the first equality in (\ref{eq2_prop1}), note first that
by definition of Hilbert--Samuel multiplicity, for every $m$ we have
$$e(\fa_m)=\lim_{q\to\infty}\frac{\ell(R/\fa^q_m)}{q^n/n!},$$
hence using Lemma~\ref{lem1} we conclude that $e(\fa_m)\leq\frac{\ell(R/\fa_m)}{n!}$.
Dividing by $m^n$ and passing to limit, we obtain
$$L:=\lim_{m\to\infty}\frac{e(\fa_m)}{m^n}\leq \vol(\fa_{\bullet}).$$
In order to prove the reverse inequality, note that given any $\epsilon>0$, by definition of $L$ and of the Hilbert--Samuel multiplicity,
we can find first $m\geq 1$ and then $q\geq 1$ such that $L>\frac{\ell(R/\fa_m^q)}{m^nq^n/n!}-\epsilon$.
Since $\fa_m^q\subseteq\fa_{mq}$, it follows that 
$$L>\frac{\ell(R/\fa_{mq})}{(mq)^n/n!}-\epsilon\geq\inf_p\frac{\ell(R/\fa_p)}{p^n/n!}-\epsilon.$$
Since this holds for every $\epsilon>0$, using (\ref{eq1_prop1}) we conclude that $L\geq\vol(\fa_{\bullet})$, completing the proof of the proposition.
\end{proof}

\begin{rmk}
Suppose that $X=\Spec(R)$ is smooth and $\fa$ is an ideal in $R$ which is cosupported at a point.
Applying Proposition~\ref{prop1} in the case of the sequence given by the powers of $\fa$,
we see that 
\[
e(\fa)=\inf_{m\geq 1}\frac{\ell(R/\fa^m)}{m^n/n!}.
\]
\end{rmk}

In this note we will be interested in graded sequences that arise from pseudo-valuations.

\begin{defi}
A function $v\colon R\to\R_{\geq 0}\cup\{\infty\}$ is said to be
a \emph{pseudo-valuation} of $R$ if it satisfies the following conditions:
\begin{equation}\tag{i}\label{eq0_v}
v(0)=\infty \quad\text{and}\quad v(\lambda)=0\quad\text{for every}\,\,\lambda\in k,
\end{equation}
\begin{equation}\tag{ii}\label{eq1_v}
v(f+g)\geq\min\{v(f),v(g)\}\quad \text{for every}\,\,f,g\in R,\,\,\text{and}
\end{equation}
\begin{equation}\tag{iii}\label{eq2_v}
v(fg)\geq v(f)+v(g)\quad \text{for every}\,\,f,g\in R.
\end{equation}
We say that a pseudo-valuation $v$ is \emph{radical} if, in addition, it satisfies
\begin{equation}\tag{iv}\label{eq3_v}
v(f^r)=r\cdot v(f)\quad \text{for every}\,\,f\in R, r\in {\mathbb Z}_{>0}.
\end{equation}

The \emph{support} of a pseudo-valuation $v$ is the closed subscheme $\Supp(v) \hookrightarrow X$ 
defined by the ideal 
\[
\fb_\infty(v) := \{f \in R \mid v(f) = \infty\}.
\]
Given a pseudo-valuation $v$ and an ideal $\fa$ in $R$, we put 
\[
v(\fa):=\inf\{v(f)\mid f\in\fa\}.
\]
We say that $v$ \emph{has center at} the closed subscheme $Y$ defined by the ideal $\fb$ in $R$ if
we have $\fb=\{f\in R\mid v(f)>0\}$. 
\end{defi}

\begin{rmk}\label{value_at_center}
Note that if $\fb$ defines the center of $v$, then $v(\fb)>0$. Indeed, if we put $I_m:=\{f\in R\mid v(f)\geq 1/m\}$, then each $I_m$ is an ideal in $R$
and we have $I_m\subseteq I_{m+1}$. Since $\fb=\bigcup_mI_m$ and $R$ is Noetherian, it follows that $\fb=I_m$ for $m\gg 0$. 
\end{rmk}

\begin{rmk}
There are two other related notions. A \emph{semi-valuation} of $R$ is a pseudo-valuation with the property that the inequality in (iii) is an equality for all $f$ and $g$
(in this case, condition (iv) is automatically satisfied). A semi-valuation $v$ is a \emph{valuation} if, in addition, we have $v(f)<\infty$ for all $f\in R\smallsetminus\{0\}$. 
It is clear that in this case we can extend $v$ to a valuation of the function field of $R$ by putting $v(f/g)=v(f)-v(g)$ for every nonzero $f,g\in R$. 
Note that if $v$ is a semi-valuation, then the ideal $\fb_{\infty}(v)$ is a prime ideal and we have 
a valuation $\overline{v}$ on $R/\fb_{\infty}$ such that $v=\overline{v}\circ \pi$, where $\pi\colon R\to R/\fb_{\infty}$ is the canonical projection. 
\end{rmk}

\begin{rmk}
If $(v_{\alpha})_{\alpha\in\Lambda}$ is a family of semi-valuations of $R$ and we put $v(f):=\inf_{\alpha\in\Lambda}v_{\alpha}(f)$, then $v$ is a radical pseudo-valuation. 
Note that the support of $v$ is the union of the supports of the $v_{\alpha}$ and if $\Lambda$ is finite, then the center of $v$ is the union of the centers of the $v_{\alpha}$.
In particular, these sets are not necessarily irreducible. It is a theorem of Bergman that every radical pseudo-valuation arises in this way. More precisely, for every
radical pseudo-valuation $w$ of $R$, there is a family $(w_i)_{i\in I}$ of semi-valuations of $R$ such that $w(f)=\inf_iw_i(f)$ for every $f\in R$ 
(see \cite[Theorem 2]{Ber71}).
\end{rmk}

\begin{rmk}
There is a canonical way to obtain a radical pseudo-valuation of $R$ from an arbitrary pseudo-valuation.
Indeed, if $v$ is any pseudo-valuation, then we put
$$\widetilde{v}(f):=\inf_{m\geq 1}\frac{v(f^m)}{m}=\lim_{m\to\infty}\frac{v(f^m)}{m},$$
where the second equality follows from property (iii) and a version of Lemma~\ref{lem2}
(see \cite[Lemma 1.4]{Mus02a}). It is easy to see that $\widetilde{v}$ is a radical pseudo-valuation
such that $\widetilde{v}(f)\leq v(f)$ for every $f\in R$. Moreover, if $w$ is another radical pseudo-valuation such that
$w(f)\leq v(f)$ for every $f\in R$, then $w(f)\leq \widetilde{w}(f)$ for every $f\in R$.
\end{rmk}

Suppose that $v$ is a pseudo-valuation of $R$. 
We define for every $m\in\Z_{\geq 0}$
\[
\fb_m(v):=\{f\in R \mid v(f)\geq m\}.
\]
It follows from \eqref{eq1_v} and \eqref{eq2_v} that
$\fb_{\bullet}(v) = (\fb_m(v))_{m \ge 0}$ is a graded sequence of ideals.

\begin{rmk} 
The sequence $\fb_{\bullet}(v)$ clearly satisfies
the condition $\fb_p(v)\subseteq\fb_q(v)$ for $p\geq q$.
\end{rmk}

\begin{eg}\label{valuation_from_ideal}
Suppose that $I\neq R$ is an ideal of $R$. If for every $f\in R$, we put
$v_I(f):=\min\{m\geq 0\mid f\in I^m\}$, then $v_I$ is a pseudo-valuation of $R$, with support $X$ and whose center is defined by $I$.
It follows from definition that in this case $\fb_m(v_I)=I^m$.
\end{eg}

\begin{rmk}\label{radical_of_ideal_b}
It is clear that for every pseudo-valuation $v$ and every $m\geq 1$, if $\fb$ is the ideal defining the center of $v$, then
$\fb_m(v)\subseteq \fb$ and the two ideals have the same radical. In fact, if $d$ is an integer
such that $d\cdot v(\fb)\geq 1$ (see Remark~\ref{value_at_center}), then
$\fb^{dm}\subseteq\fb_m(v)$ for every $m\geq 1$.
\end{rmk}

We will be mostly interested in pseudo-valuations with $0$-dimensional center.

\begin{defi}
The \emph{volume} of a pseudo-valuation $v$ of $R$
is defined to be the volume
\[
\vol(v) := \vol(\fb_\bullet(v))
\]
of the graded sequence $\fb_{\bullet}(v)$. 
Recall that by (\ref{eq:vol=lim-e}) and (\ref{eq_limit_in_volume}), we have
\[
\vol(v) =\lim_{m\to\infty}\frac{\ell(R/\fb_m(v))}{m^n/n!}=\lim_{m\to\infty}\frac{e(\fb_m(v))}{m^n}.
\]
\end{defi}

\begin{rmk}
\label{r:A}
We have $\vol(v) < \infty$ if and only if the center of $v$ is a finite set. Indeed, if the latter condition holds, then  
the finiteness of the volume follows from Remark~\ref{ideals_cosupported_finite set}. On the other hand,
if the center of $v$ has positive dimension, then $\ell(R/\fb_m(v))=\infty$ for all $m\geq 1$ by Remark~\ref{radical_of_ideal_b}.
\end{rmk}

\begin{eg}\label{eg_multiplicity_ideal}
If $I\neq R$ is an ideal whose cosupport consists of one point and $v_I$ is the pseudo-valuation associated to $I$ in Remark~\ref{valuation_from_ideal},
then $\vol(v_I)=e(I)$. 
\end{eg}

\begin{eg}\label{Rees_valuations}
Let $I\neq R$ be an ideal in $R$. Recall that there are finitely many divisorial valuations $w_1,\ldots,w_r$ of $R$ (the \emph{Rees valuations} of $I$)
with the property that for every $m\geq 0$, the integral closure $\overline{I^m}$ of $I^m$ is equal to
$$\{f\in R\mid w_i(f)\geq m\cdot w_i(I)\,\,\text{for}\,\,1\leq i\leq r\}.$$
We refer to \cite{Swa11} for an introduction to Rees valuations. 
In particular, we see that if $w$ is the pseudo-valuation given by
$w=\min_i\frac{1}{w_i(I)}w_i$, then $b_m(w)=\overline{I^m}$ for every $m$.
In particular, it follows from Example~\ref{volume_integral_closure} that if the cosupport of $I$ consists of one point, 
then $\vol(w)=e(I)$.
\end{eg}

\begin{eg}\label{eg_ineq_valuations}
Suppose that $v$ and $w$ are pseudo-valuations of $R$ such that $v(f)\geq w(f)$ for all $f\in R$.
In this case we have $\fb_m(w)\subseteq\fb_m(v)$ for all $m$. By taking the colength, dividing by $m^n/n!$, and
passing to limit, we obtain $\vol(w)\geq \vol(v)$.
\end{eg}

\begin{eg}\label{eg_scalar_multiple}
If $v$ is a pseudo-valuation of $R$ and $\alpha$ is a positive real number, then $\alpha v$ is a pseudo-valuation such that
$\vol(\alpha v)=\frac{1}{\alpha^n}\cdot\vol(v)$. Indeed, note that we have
$$\fb_m(\alpha v)\supseteq \fb_{\lceil m/\alpha\rceil}(v),$$
hence
$$\vol(\alpha v)\leq \lim_{m\to\infty}\frac{\ell(R/\fb_{\lceil m/\alpha\rceil}(v))}{\lceil m/\alpha\rceil^n/n!}\cdot \frac{\lceil m/\alpha\rceil^n}{m^n}=\vol(v)\cdot \frac{1}{\alpha^n}.$$
By writing $v=\frac{1}{\alpha}(\alpha v)$ and applying the inequality already proved, we obtain
$\vol(v)\leq\alpha^n\cdot\vol(\alpha v)$. By combining the two inequalities, we obtain $\vol(\alpha v)=\frac{1}{\alpha^n}\cdot\vol(v)$.
\end{eg}

The following proposition gives an important example of valuation with positive volume.

\begin{prop}
\label{p:ELS}
If $v$ is a divisorial valuation of $R$ having center at a closed point $x\in X$ and $X$ is analytically unramified\footnote{This means that the completion
$\widehat{\cO_{X,x}}$ is a domain (note that it is always reduced, since $\cO_{X,x}$ is a reduced excellent ring). The condition is satisfied, for example,
if $X$ is normal.} at $x$, then $\val(v) > 0$. 
\end{prop}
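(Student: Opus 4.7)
The plan is to compare $v$ with the $\fm_x$-adic order via Izumi's inequality, and then to read off a positive lower bound on $\vol(v)$ directly from the Hilbert--Samuel polynomial of $\fm_x$. The one nontrivial input is Izumi's theorem: since $v$ is a divisorial valuation centered at the closed point $x$ and $\widehat{\cO_{X,x}}$ is a domain, there exists a constant $C>0$ such that
\[
v(f) \leq C\cdot \ord_{\fm_x}(f) \quad \text{for every } f \in R\setminus\{0\},
\]
where $\ord_{\fm_x}(f) := \sup\{k\geq 0 : f\in \fm_x^k\}$. This is precisely where the analytic-irreducibility hypothesis on $\widehat{\cO_{X,x}}$ is used; without it the inequality can fail.

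From this estimate one reads off the ideal inclusion $\fb_m(v)\subseteq \fm_x^{\lceil m/C\rceil}$: if $v(f)\geq m$, then $\ord_{\fm_x}(f)\geq m/C$, hence $f\in\fm_x^{\lceil m/C\rceil}$. Taking colengths and using that $\dim\cO_{X,x}=n$, the Hilbert--Samuel polynomial of $\fm_x$ gives $\ell(R/\fm_x^k)=\tfrac{e(\fm_x)}{n!}k^n+O(k^{n-1})$, so
\[
\ell(R/\fb_m(v)) \;\geq\; \ell(R/\fm_x^{\lceil m/C\rceil}) \;=\; \frac{e(\fm_x)}{n!}\bigl\lceil m/C\bigr\rceil^n + O(m^{n-1}).
\]
Dividing by $m^n/n!$ and letting $m\to\infty$ yields $\vol(v)\geq e(\fm_x)/C^n>0$, which is the desired conclusion.

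The main obstacle is Izumi's inequality itself, which I would quote as a black box from the literature; its proof is commutative-algebraic and crucially uses the analytic irreducibility of $\widehat{\cO_{X,x}}$, so the hypothesis of the proposition enters at exactly one point. Once this is granted, the rest of the argument is a direct comparison with powers of the maximal ideal together with an application of the Hilbert--Samuel formula, and no further property of the divisorial valuation $v$ beyond having center at $x$ is used.
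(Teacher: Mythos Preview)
Your proof is correct and follows essentially the same approach as the paper: both invoke Izumi's theorem to bound $v$ above by a constant multiple of the $\fm_x$-adic filtration and then deduce $\vol(v)\geq e(\fm_x)/C^n>0$. The only cosmetic difference is that the paper phrases the comparison via the Rees valuations of $\fm_x$ and the earlier Examples on volumes of scaled pseudo-valuations, whereas you go directly through the inclusion $\fb_m(v)\subseteq\fm_x^{\lceil m/C\rceil}$ and the Hilbert--Samuel asymptotics; the substance is the same.
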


\begin{proof}
This is an immediate consequence of Izumi's theorem (see for example \cite[Theorem~1.2]{HS01}). 
This says that since the local ring $\cO_{X,x}$ is analytically unramified, there is a constant $c=c(v)$ such that
for every other divisorial valuation $v'$ with center $\{x\}$, we have
$v(f)\leq c\cdot v'(f)$ for every $f\in R$. 
Let $w_1,\ldots,w_r$ be the Rees valuations corresponding to the maximal ideal $\fm_x$ defining $x$. If $w=\min_i\frac{1}{w_i(\fm_x)}w_i$ and $\alpha=c\cdot \max_iw_i(\fm_x)$, 
then we see that $v(f)\leq \alpha\cdot w(f)$ for every $f\in R$. By combining Examples~\ref{Rees_valuations}, ~\ref{eg_ineq_valuations}, and \ref{eg_scalar_multiple}, 
we conclude that
$$\vol(v)\geq\vol(\alpha\cdot w)=\frac{\vol(w)}{\alpha^n}=\frac{e(\fm_x)}{\alpha^n}>0.$$
\end{proof}

\section{The volume of a subset in the space of arcs}

Suppose, as in the previous section, that $X=\Spec(R)$ is an $n$-dimensional,
affine algebraic variety over 
an algebraically closed field $k$. We now assume that ${\rm char}(k)=0$.

Let $X_{\infty}$ be the scheme of arcs of $X$ (for an introduction to spaces of arcs, see for example \cite{EM09}).
Since $X$ is affine, $X_{\infty}$ is affine as well, but in general not of finite type over $k$.
Note that if $\gamma\in X_{\infty}$ is a point with residue field $k(\gamma)$, then we can identify $\gamma$ with 
a morphism $\Spec(k(\gamma)\llbracket t\rrbracket)\to X$. We denote by $\pi\colon X_{\infty}\to X$ the canonical projection
taking $\gamma$ to $\gamma(0)$, the image by $\gamma$ of the closed point.

\begin{rmk}
While $X_{\infty}$ is not a Noetherian scheme, if $C$ is a closed subset of $X_{\infty}$, we may still consider the
irreducible components of $C$: these correspond to the prime ideals in $\cO(X_{\infty})$ which are minimal over the ideal
of $C$. Note that we can still write $C$ as the union of its irreducible components: this is an immediate application of Zorn's Lemma.
\end{rmk}

For every $\gamma\in X_{\infty}$, we define the function
$\ord_{\gamma}\colon R\to \Z_{\geq 0}\cup\{\infty\}$ given by 
$\ord_{\gamma}(f)=\ord_t(\gamma^*(f))$. It is clear that $\ord_{\gamma}$ is a semi-valuation of $R$.

Given a subset $C\subseteq X_{\infty}$, 
we consider the function $\ord_C\colon R \to\Z_{\geq 0}\cup\{\infty\}$
defined by 
\[
\ord_C(f)=\min_{\gamma\in C}\ord_{\gamma}(f).
\]
It follows from the definition that $\ord_C$ is a radical pseudo-valuation. 
For short, we denote $\fb_m(C) := \fb_m(\ord_C) \subseteq R$
and, similarly, let $\fb_\bullet(C) := \fb_\bullet(\ord_C)$.

\begin{lem}
\label{l:ov-C}
If $\ov C$ is the closure of a subset $C\subseteq X_{\infty}$, then $\ord_{\overline{C}} = \ord_C$.
\end{lem}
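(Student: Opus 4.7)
The plan is to establish both inequalities $\ord_{\overline{C}}(f) \leq \ord_C(f)$ and $\ord_{\overline{C}}(f) \geq \ord_C(f)$ for every $f \in R$. The first inequality is immediate from the inclusion $C \subseteq \overline{C}$, since the minimum over a larger set cannot be larger.

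For the reverse inequality, the natural strategy is to reduce to showing $\fb_m(C) = \fb_m(\overline{C})$ for every $m \ge 0$. Indeed, once this equality is in hand, the conclusion follows from the fact that $\ord_C(f) = \sup\{m \ge 0 : f \in \fb_m(C)\}$, and similarly for $\overline{C}$. The inclusion $\fb_m(\overline{C}) \subseteq \fb_m(C)$ is clear, so the task is to prove the reverse; phrased contrapositively, if the set $C$ is contained in the contact locus $\Cont^{\ge m}((f)) = \{\gamma \in X_\infty : \ord_\gamma(f) \ge m\}$, then so is its closure $\overline{C}$.

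The crux of the argument is therefore the closedness of $\Cont^{\ge m}((f))$ in $X_\infty$ for every $f \in R$ and $m \ge 0$. To see this, one writes $X_\infty = \liminv X_p$ with truncation maps $\p_p \colon X_\infty \to X_p$, and observes that for any arc $\gamma \colon \Spec K\llbracket t\rrbracket \to X$, the coefficients $a_0(\gamma), a_1(\gamma), \dots$ of $\gamma^*(f) \in K\llbracket t\rrbracket$ are, for indices up to $p$, pulled back via $\p_p$ from regular functions on $X_p$. The condition $\ord_\gamma(f) \ge m$ is precisely $a_0(\gamma) = \cdots = a_{m-1}(\gamma) = 0$, which defines a closed subscheme of $X_{m-1}$; pulling back via $\p_{m-1}$ shows that $\Cont^{\ge m}((f))$ is closed in $X_\infty$.

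Since $\overline{C}$ is the smallest closed set containing $C$, the closedness of the contact locus gives the inclusion $\overline{C} \subseteq \Cont^{\ge m}((f))$ whenever $C$ enjoys it, so $\fb_m(\overline{C}) = \fb_m(C)$ and the proof is complete. There is no serious obstacle here; the content of the lemma is essentially the well-known lower semicontinuity of the function $\gamma \mapsto \ord_\gamma(f)$ on $X_\infty$.
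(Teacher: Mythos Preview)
Your proof is correct and follows exactly the same idea as the paper's: the key point is that the set $\{\gamma \in X_\infty \mid \ord_\gamma(f) \ge m\}$ is closed, and you have simply spelled out why this holds via the truncation maps, whereas the paper states it as a one-line fact.
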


\begin{proof}
The assertion follows from the fact that for every $f\in R$ and every $m\in\Z$, the set $\{\gamma\in X_{\infty}\mid \ord_{\gamma}(f)\geq m\}$
is closed. 
\end{proof}

The assertion in the next lemma follows directly from definition. 

\begin{lem}\label{valuation_union_subsets}
If $C=\bigcup_{i\in I}C_i$, then $\ord_C(f)=\min_{i\in I}\ord_{C_i}(f)$ for every $f\in R$.
\end{lem}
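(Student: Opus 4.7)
The plan is simply to unwind the two definitions involved. By definition, $\ord_C(f) = \min_{\gamma \in C} \ord_\gamma(f)$, where the minimum is taken in the well-ordered set $\Z_{\geq 0} \cup \{\infty\}$ and hence is attained (or equals $\infty$ if $C = \emptyset$). The same formula applies to each $\ord_{C_i}(f)$.

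Given the decomposition $C = \bigcup_{i \in I} C_i$, I would observe that the family of values $\{\ord_\gamma(f) \mid \gamma \in C\}$ coincides with the union $\bigcup_{i \in I}\{\ord_\gamma(f) \mid \gamma \in C_i\}$. Taking the minimum of a union equals the minimum of the individual minima, which gives
\[
\ord_C(f) = \min_{\gamma \in C} \ord_\gamma(f) = \min_{i \in I} \min_{\gamma \in C_i} \ord_\gamma(f) = \min_{i \in I} \ord_{C_i}(f).
\]
Since there is no genuine obstacle here (the author himself signals that the statement is immediate from the definitions), the only care needed is to note that the minima are well defined in $\Z_{\geq 0}\cup\{\infty\}$ and that the formula is compatible with an arbitrary, possibly infinite, index set $I$.
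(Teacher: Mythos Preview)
Your proposal is correct and matches the paper's approach: the paper itself gives no proof beyond the remark that the assertion follows directly from the definition, and what you have written is exactly that unwinding.
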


\begin{rmk}
If $C$ is irreducible, then $\ord_C$ is a semi-valuation. Indeed, it follows from Lemma~\ref{l:ov-C} that if $\delta$ is the generic point
of $\ov C$, then $\ord_C=\ord_{\delta}$, hence $\ord_C$ is a semi-valuation.
\end{rmk}

\begin{rmk}
\label{r:B}
The center of the pseudo-valuation $\ord_C$ is equal to $\overline{\pi(C)}$, with the reduced scheme structure. Indeed, this follows from the fact that for $f\in R$ and $\gamma\in X_{\infty}$,
we have $\ord_{\gamma}(f)\geq 1$ if and only if $f$ lies in the ideal defining $\overline{\pi(\gamma)}$.
\end{rmk}

\begin{defi}
We define the \emph{volume} $\vol(C)$ of a set $C \subseteq X_\infty$ to be the volume
\[
\vol(C) := \vol(\ord_C) = \vol(\fb_\bullet(C))
\] 
of the pseudo-valuation $\ord_C$. 
\end{defi}

\begin{prop}
\label{p:vol=infty}
For every $C \subseteq X_\infty$, we have 
$\vol(C) < \infty$ if and only if $\pi(C)$ is a finite set of closed points.
\end{prop}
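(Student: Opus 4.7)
The plan is to simply combine Remark~\ref{r:B}, which identifies the center of the pseudo-valuation $\ord_C$ with the closed subset $\overline{\pi(C)} \subseteq X$, with Remark~\ref{r:A}, which characterizes when a pseudo-valuation has finite volume in terms of the finiteness of its center. Since $\vol(C)$ is by definition equal to $\vol(\ord_C)$, this will reduce the proposition to an elementary set-theoretic statement about $\overline{\pi(C)}$.

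More precisely, I would argue as follows. By definition, $\vol(C) = \vol(\ord_C)$, so Remark~\ref{r:A} gives $\vol(C) < \infty$ if and only if the center of $\ord_C$ is a finite set. By Remark~\ref{r:B}, this center is $\overline{\pi(C)}$ (endowed with the reduced scheme structure), so the condition becomes: $\overline{\pi(C)}$ is a finite set.

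It then remains to observe the equivalence of the two conditions:
\begin{equation*}
\overline{\pi(C)}\ \text{is a finite set} \quad\Longleftrightarrow\quad \pi(C)\ \text{is a finite set of closed points.}
\end{equation*}
For the forward direction, if $\overline{\pi(C)}$ is finite, then $\pi(C) \subseteq \overline{\pi(C)}$ is finite; moreover, a finite subset of $X$ can be closed only if each of its points is closed, so every point of $\overline{\pi(C)}$ (and in particular of $\pi(C)$) is closed. Conversely, if $\pi(C)$ is a finite set of closed points, then $\pi(C)$ is already closed in $X$, so $\overline{\pi(C)} = \pi(C)$ is finite. Combining these two equivalences yields the proposition.

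There is no real obstacle here: the substantive content has already been packaged into Remarks~\ref{r:A} and~\ref{r:B}, whose combined statement is exactly what is needed. The only thing to be careful about is to spell out the last set-theoretic step, so that the reader sees that ``finite center of $\ord_C$'' translates cleanly into ``$\pi(C)$ is a finite set of closed points'' as opposed to just ``$\pi(C)$ is finite''.
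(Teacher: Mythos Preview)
Your proposal is correct and follows exactly the same approach as the paper, which simply states that the assertion follows by combining Remarks~\ref{r:A} and~\ref{r:B}. You have just spelled out in more detail the elementary set-theoretic step translating ``$\overline{\pi(C)}$ is finite'' into ``$\pi(C)$ is a finite set of closed points,'' which the paper leaves implicit.
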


\begin{proof}
The assertion follows by combining Remarks~\ref{r:A} and~\ref{r:B}.
\end{proof}

From now on, we restrict our attention to subsets $C\subseteq X_{\infty}$ whose image in $X$ is a finite set of closed points.
In the next propositions, we give some basic properties of volumes of subsets of $X_{\infty}$.

\begin{prop}
\label{p:C1-C2}
If $C_1 \subseteq C_2$, then
$\vol(C_1) \le \vol(C_2)$.
\end{prop}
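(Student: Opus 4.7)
The plan is to reduce this directly to the monotonicity of volume with respect to pointwise inequality of pseudo-valuations, which was recorded in Example~\ref{eg_ineq_valuations}. First I would observe that, since $C_1 \subseteq C_2$, taking a minimum over a larger indexing set can only make it smaller: for every $f \in R$,
\[
\ord_{C_2}(f) = \min_{\gamma \in C_2} \ord_\gamma(f) \le \min_{\gamma \in C_1} \ord_\gamma(f) = \ord_{C_1}(f).
\]
Equivalently, $\fb_m(C_2) \subseteq \fb_m(C_1)$ for every $m \ge 0$.

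Then I would apply Example~\ref{eg_ineq_valuations} to the pseudo-valuations $v = \ord_{C_1}$ and $w = \ord_{C_2}$: from $v(f) \ge w(f)$ for all $f$ one concludes $\vol(v) \le \vol(w)$, which is precisely $\vol(C_1) \le \vol(C_2)$. One may also argue directly from the inclusion of ideals: the surjection $R/\fb_m(C_2) \twoheadrightarrow R/\fb_m(C_1)$ yields $\ell(R/\fb_m(C_1)) \le \ell(R/\fb_m(C_2))$, and dividing by $m^n/n!$ and taking $\limsup$ as $m \to \infty$ gives the claim.

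The only mild point to address is the case where the volumes might be infinite. By the standing convention in force (``we restrict our attention to subsets $C \subseteq X_\infty$ whose image in $X$ is a finite set of closed points'') both volumes are finite, but if one wanted to drop this convention, then by Proposition~\ref{p:vol=infty} either $\pi(C_2)$ fails to be a finite set of closed points, in which case $\vol(C_2) = \infty$ and the inequality is trivial, or $\pi(C_2)$ is such a set and then so is $\pi(C_1) \subseteq \pi(C_2)$, bringing us back to the finite case. There is no substantive obstacle in this proof; the whole content is the observation that $\ord_C$ is pointwise monotone decreasing in $C$.
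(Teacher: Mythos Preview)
Your proof is correct and follows exactly the same approach as the paper: you observe that $C_1 \subseteq C_2$ implies $\ord_{C_1}(f) \ge \ord_{C_2}(f)$ for all $f \in R$, and then invoke Example~\ref{eg_ineq_valuations}. The additional remarks you include (the direct argument via colengths and the discussion of the infinite-volume case) are sound but not needed, as the paper's proof is just the two-line reduction to Example~\ref{eg_ineq_valuations}.
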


\begin{proof}
If $C_1\subseteq C_2$ then it is clear that $\ord_{C_1}(f)\geq\ord_{C_2}(f)$ for every $f\in R$.
The assertion then follows from Example~\ref{eg_ineq_valuations}.
\end{proof}

The next proposition allows us to reduce to considering subsets lying in a fiber of $\pi\colon X_{\infty}\to X$.
For every closed point $x\in X$, we denote the fiber $\pi^{-1}(x)$ by $X_{\infty}(x)$.

\begin{prop}\label{p:decomposition}
Let $C\subseteq X_{\infty}$ be such that $\pi(C)$ is a finite set of closed points. If we consider the unique decomposition
$C=C_1\cup\ldots\cup C_r$ such that the $\pi(C_i)$ are pairwise distinct points, then we have
$$\vol(C)=\sum_{i=1}^r\vol(C_i).$$
\end{prop}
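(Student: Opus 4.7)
The plan is to reduce the statement to the decomposition formula \eqref{eq2_ideals_cosupported_finite set} in Remark \ref{ideals_cosupported_finite set}, applied to the graded sequence $\fb_\bullet(C)$. The key point is to verify that the primary decomposition of each ideal $\fb_m(C)$ is compatible, term by term, with the set-theoretic decomposition $C = C_1 \cup \cdots \cup C_r$.

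First, I would apply Lemma \ref{valuation_union_subsets} to obtain $\ord_C(f) = \min_{1 \le i \le r} \ord_{C_i}(f)$ for every $f \in R$. From the definition of the ideals $\fb_m$, this gives immediately
\[
\fb_m(C) = \bigcap_{i=1}^r \fb_m(C_i) \fevery m \ge 0.
\]
Next, I would argue that for each $m \ge 1$, the ideal $\fb_m(C_i)$ is $\fm_{x_i}$-primary. Indeed, since $\pi(C_i) = \{x_i\}$ is a single closed point, Remark \ref{r:B} identifies the center of $\ord_{C_i}$ with $\fm_{x_i}$, and Remark \ref{radical_of_ideal_b} then yields $\sqrt{\fb_m(C_i)} = \fm_{x_i}$, so $\fb_m(C_i)$ is cosupported at $x_i$. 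Since the $x_i$ are distinct closed points, the ideals $\fb_m(C_1),\dots,\fb_m(C_r)$ are pairwise comaximal.

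By the Chinese Remainder Theorem,
\[
R/\fb_m(C) \;\cong\; \prod_{i=1}^r R/\fb_m(C_i),
\]
and therefore $\ell(R/\fb_m(C)) = \sum_{i=1}^r \ell(R/\fb_m(C_i))$. Dividing by $m^n/n!$ and passing to the limit (each of these limits exists by \eqref{eq_limit_in_volume}, since $\pi(C)$ and each $\pi(C_i)$ is a finite set of closed points, so Remark \ref{ideals_cosupported_finite set} applies) gives exactly $\vol(C) = \sum_{i=1}^r \vol(C_i)$. Equivalently, the above shows that $\fb_m(C_i)$ is the $\fm_{x_i}$-primary component of $\fb_m(C)$, so the identity is a direct instance of \eqref{eq2_ideals_cosupported_finite set}.

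There is no real obstacle beyond this bookkeeping: all the nontrivial analytic work (the existence of the limit defining $\vol$, and the decomposition of colengths into a sum over primary components) is already contained in \eqref{eq_limit_in_volume} and Remark \ref{ideals_cosupported_finite set}. The only thing one must check by hand is the matching between the set-theoretic decomposition of $C$ and the primary decomposition of $\fb_m(C)$, which is immediate from Lemma \ref{valuation_union_subsets} together with Remarks \ref{r:B} and \ref{radical_of_ideal_b}.
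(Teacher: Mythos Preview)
Your proposal is correct and follows essentially the same route as the paper's own proof: establish $\fb_m(C)=\bigcap_i\fb_m(C_i)$ with each $\fb_m(C_i)$ cosupported at $x_i$, and then invoke Remark~\ref{ideals_cosupported_finite set}. The paper simply declares both facts ``clear'' and cites the remark, whereas you spell out the justifications via Lemma~\ref{valuation_union_subsets} and Remarks~\ref{r:B} and~\ref{radical_of_ideal_b}; the content is the same.
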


\begin{proof}
If $\pi(C_i)=\{x_i\}$, then it is clear that 
$$\fb_m(C)=\bigcap_{i=1}^r\fb_m(C_j)$$
and $\fb_m(C_j)$ is cosupported at $x_j$ for every $m\geq 1$.
Therefore the assertion follows from Remark~\ref{ideals_cosupported_finite set}.
\end{proof}

\begin{prop}\label{bound_volume_by_multiplicity}
If $C\subseteq X_{\infty}(x)$, for some closed point $x\in X$, then 
$$\vol(X)\leq e_{x}(X).$$
\end{prop}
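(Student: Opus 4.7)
The plan is to bound $\fb_m(C)$ from below by a power of the maximal ideal $\fm_x$, and then invoke the definition of the Hilbert--Samuel multiplicity. (I read the statement as the evident typo $\vol(C)\le e_x(X)$.)

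First I would observe that since every arc $\g\in C$ has base point $\g(0)=x$, we have $\g^*(f)(0)=f(x)=0$ for every $f\in\fm_x$, so $\ord_\g(f)\ge 1$. Taking the infimum over $\g\in C$ gives $\ord_C(f)\ge 1$ for $f\in\fm_x$, i.e. $\fm_x\subseteq\fb_1(C)$. Combined with the graded-sequence inclusion $\fb_1(C)^m\subseteq\fb_m(C)$ (an immediate consequence of conditions \eqref{eq1_v} and \eqref{eq2_v} applied to $\ord_C$), this yields $\fm_x^m\subseteq\fb_m(C)$ for every $m\ge 1$, and therefore
\[
\ell(R/\fb_m(C))\le \ell(R/\fm_x^m).
\]

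Next I would divide both sides by $m^n/n!$ and pass to the limit. By Cutkosky's theorem \cite[Theorem~1]{Cut14} (already invoked in the definition of $\vol(C)$), the limit on the left exists and equals $\vol(C)$; the limit on the right is, by definition of the Hilbert--Samuel multiplicity, equal to $e(\fm_x)=e_x(X)$. This gives the inequality $\vol(C)\le e_x(X)$.

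There is no real obstacle here, since both sides are defined through colengths of the same normalization; the only small point to check is that one may freely apply the graded-sequence inequality $\fb_1(C)^m\subseteq\fb_m(C)$, which follows directly from the definition of $\fb_{\bullet}(C)=\fb_{\bullet}(\ord_C)$ together with the super-additivity of $\ord_C$ established earlier in this section.
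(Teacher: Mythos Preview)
Your proof is correct and follows essentially the same approach as the paper: observe $\fm_x\subseteq\fb_1(C)$, deduce $\fm_x^m\subseteq\fb_1(C)^m\subseteq\fb_m(C)$ from the graded-sequence property, and pass to the limit to conclude $\vol(C)\le e(\fm_x)=e_x(X)$. The paper's version is terser (it jumps directly from the ideal containment to the volume inequality), but the content is identical, and your reading of the typo is correct.
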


\begin{proof}
Note that if $\fm_x$ is the ideal defining $x$, then $\fm_x \subseteq \fb_1(C)$. Therefore
$\fm_x^p \subseteq \fb_1(C)^p \subseteq \fb_p(C)$ for every $p$, and we obtain
$\vol(C) \le e(\fm_x) = e_{x}(X)$.
\end{proof}

The following definition extends the notions of thin and fat arcs introduced in
\cite{ELM04,Ish05} to arbitrary sets of arcs. 

\begin{defi}
A subset $C$ of $X_\infty$ is said to be \emph{thin} 
if there exists a proper closed subscheme $Z \hookrightarrow X$
such that $C \subseteq Z_\infty$. If $C$ is not thin, then we say that $C$ is \emph{fat}. 
A subset $C$ of $X_{\infty}$ is a \emph{cylinder} if $C=\pi_m^{-1}(S)$ for some
$m$ and some constructible subset $S\subseteq X_m$, where $\pi_m\colon X_{\infty}\to X_m$
is the canonical projection. It is a basic fact that a cylinder $C$ is thin if and only if $C\subseteq (X_{\rm sing})_{\infty}$, where
$X_{\rm sing}$ is the singular locus of $X$ (see \cite[Lemma~5.1]{EM09}).
\end{defi}

\begin{prop}
\label{p:vol=0}
Let $C$ be a subset of $X_{\infty}$ whose image in $X$ is a finite set of closed points. If $C$ is thin, then $\vol(C)=0$, and if the closure of $C$ is a fat cylinder
and $X$ is analytically unramified at every point, then $\vol(C)>0$.
\end{prop}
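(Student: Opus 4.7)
The proof splits into two independent parts. For the vanishing when $C$ is thin, the plan is to observe that if $C \subseteq Z_\infty$ for a proper closed subscheme $Z \subsetneq X$ with ideal $J \subseteq R$, then $\ord_\gamma(f) = \infty$ for every $\gamma \in C$ and every $f \in J$, so $J \subseteq \fb_m(C)$ for all $m \ge 0$. After reducing via Proposition~\ref{p:decomposition} to the case $\pi(C) = \{x\}$ for a single closed point $x$, one also has $\fm_x^m \subseteq \fb_m(C)$, hence
\[
\ell(R/\fb_m(C)) \le \ell\bigl(R/(J + \fm_x^m)\bigr).
\]
The right-hand side is the Hilbert--Samuel function of the local ring $(R/J)_{\fm_x}$, whose Krull dimension is at most $\dim Z \le n - 1$, so it grows as $O(m^{n-1})$. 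Dividing by $m^n/n!$ and letting $m\to\infty$ yields $\vol(C) = 0$.

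For the positivity when $\ov C$ is a fat cylinder, by Lemma~\ref{l:ov-C} we may replace $C$ by $\ov C$ and assume $C$ itself is a fat cylinder. A cylinder has only finitely many irreducible components, and they cannot all be thin (otherwise their union would witness $C$ being thin), so we pick an irreducible fat component $C_0 \subseteq C$. By Proposition~\ref{p:C1-C2} it suffices to prove $\vol(C_0) > 0$; note that $\pi(C_0)$ is an irreducible subset of the finite set $\pi(C)$ of closed points, hence $\pi(C_0) = \{x\}$ for some closed point $x$ at which $X$ is analytically unramified by hypothesis.

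The key step is to extract a divisorial valuation from $C_0$: the generic arc $\gamma_0$ of $C_0$ is fat, so $\gamma_0^*\colon R \to k(\gamma_0)\llbracket t \rrbracket$ is injective and yields a valuation $v := \ord_t \circ \gamma_0^*$ of $k(X)$. Invoking the standard correspondence between irreducible fat cylinders in $X_\infty$ and divisorial valuations of $k(X)$ (cf.\ \cite{ELM04,Ish05}), $v$ is divisorial with center $x$. Since $\gamma_0 \in C_0$, one has $\ord_{C_0}(f) \le \ord_{\gamma_0}(f) = v(f)$ for every $f \in R$, hence $\fb_m(C_0) \subseteq \fb_m(v)$ and thus $\ell(R/\fb_m(C_0)) \ge \ell(R/\fb_m(v))$. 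Passing to the limit gives $\vol(C_0) \ge \vol(v)$, which is strictly positive by Proposition~\ref{p:ELS}.

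The main obstacle will be the identification of the divisorial valuation associated to the generic arc of the fat cylinder $C_0$ and the verification that its center coincides with $x$; once this structural input is in place, both parts reduce to standard Hilbert--Samuel and colength manipulations.
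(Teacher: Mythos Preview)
Your proposal is correct and follows essentially the same approach as the paper. The paper is terser in the thin case (it simply notes $\ell(\O_Z/\fb_m(C)\O_Z)=o(m^n)$ since $\dim Z<n$, without the explicit reduction to one point), and in the fat case it observes directly that $\ord_{C_0}=\ord_{\gamma_0}$ (rather than your inequality) and cites \cite[Propositions~2.12 and~3.9]{dFEI08} for the divisoriality of this valuation on a possibly singular $X$, which is the appropriate reference in place of \cite{ELM04,Ish05}.
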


\begin{proof}
Suppose first that there exists a proper closed subscheme $Z$ of $X$ such that
$C \subseteq Z_\infty$. Let $I_Z \subseteq \O_X$ be the ideal of $Z$. We have 
$I_Z \subseteq \fb_m(C)$ for every $m$, hence 
\[
\ell(\O_X/\fb_m(C)) = \ell(\O_Z/\fb_m(C)\O_Z) = o(m^n)
\]
since $\dim Z < n$. This implies that $\vol(C) = 0$. 

Let us assume now that $\ov C$ is a fat cylinder. Since $\ord_C=\ord_{\ov C}$ by Lemma~\ref{l:ov-C},
we may replace $C$ by $\ov C$ and thus assume that $C$ is closed. Since $C$ is a cylinder, 
it has finitely many irreducible components (see \cite[Proposition~3.5]{dFEI08}). One of these, say $C'$, has to be fat,
in which case $\ord_{C'}$ is a divisorial valuation
by \cite[Propositions~2.12 and~3.9]{dFEI08}. 
Of course, the image of $C'$ in $X$ consists of one closed point.
Using Propositions~\ref{p:C1-C2} and~\ref{p:ELS}, we conclude that
\[
\vol(C) \ge \vol(C') = \vol(\ord_{C'}) > 0.
\]
\end{proof}

We now address the results stated in the introduction. 
We begin with the first two propositions. 

\begin{proof}[Proof of Proposition~\ref{p:inclusion-exclusion}]
For every $p$, we have
\begin{equation}\label{eq3_prop_vol}
\fb_p(C_1\cup C_2)=\fb_p(C_1)\cap\fb_p(C_2)\quad\text{and}
\end{equation}
\begin{equation}\label{eq4_prop_vol}
\fb_p(C_1\cap C_2)\supseteq \fb_p(C_1)+\fb_p(C_2).
\end{equation}
The exact sequence
\[
0\to \O_X/(\fb_p(C_1)\cap\fb_p(C_2))\to \O_X/\fb_p(C_1)\oplus \O_X/\fb_p(C_2)
\to \O_X/(\fb_p(C_1)+\fb_p(C_2))\to 0
\]
implies 
\[
\ell(\O_X/\fb_p(C_1))+\ell(\O_X/\fb_p(C_2))
=\ell(\O_X/\fb_p(C_1)\cap\fb_p(C_2))+\ell(\O_X/\fb_p(C_1)+\fb_p(C_2)).
\]
Using \eqref{eq3_prop_vol} and \eqref{eq4_prop_vol}, we conclude
\[
\ell(\O_X/\fb_p(C_1))+\ell(\O_X/\fb_p(C_2))
\geq \ell(\O_X/\fb_p(C_1\cup C_2))+\ell(\O_X/\fb_p(C_1\cap C_2)).
\]
Then the assertion follows by dividing by $p^n/n!$ and letting $p$ go to infinity.
Note that this step uses the property that the limsup in the definition of the volume is, in fact, a limit.
\end{proof}

\begin{proof}[Proof of Proposition~\ref{p:vol-cont}]
Let $C_m={\rm Cont}^{\geq m}(\fa)$. It follows from definition that
$\fa^p\subseteq \fb_{mp}(C_m)$ for every $p \ge 1$. 
By \eqref{eq:vol=lim-e}, we have
\[
m^n\.\vol(C_m) = \lim_{p \to \infty} \frac{e(\fb_{mp}(C_m))}{p^n}
\le \lim_{p \to \infty} \frac{e(\fa^p)}{p^n} = e(\fa).
\]

Using the characterization of volume in Remark~\ref{vol_as_inf_e},
we deduce from the inclusion $\fa\subseteq \fb_m(C_m)$ that
\[
\vol(C_m)\leq \frac{e(\fb_m(C_m))}{m^n}\leq\frac{e(\fa)}{m^n}.
\]
Note that if $\gamma(t)\in C_m$, then $\gamma(t^p)\in C_{mp}$. This implies that we have an inclusion
\[
\fb_{mpq}(C_{mp})\subseteq\fb_{mq}(C_m)\quad\text{for every}\,\,q, 
\]
and therefore
\[
m^n\cdot \frac{e(\fb_{mq}(C_m))}{(mq)^n}\leq (mp)^n\cdot\frac{e(\fb_{mpq}(C_{mp}))}{(mpq)^n}.
\]
By letting $q$ go to infinity, we obtain 
\[
m^n\cdot\vol(C_m)\leq (mp)^n\cdot\vol(C_{mp}).
\]

In order to complete the proof, it is enough to show that when $m$ is divisible enough, we have $\vol(C_m)\geq\frac{e(\fra)}{m^n}$. 
Suppose that $E_1,\ldots,E_r$ are the divisors over $X$ corresponding to the Rees valuations associated to the ideal $\fra$
(see Example~\ref{Rees_valuations}). 
We put $q_i=\ord_{E_i}(\fa)$ and assume that $m$ is divisible by every $q_i$.
Recall that if $E$ is a divisor over $X$, then there is a sequence of irreducible closed subsets
$C^q_X(E)$, for $q\geq 1$, called the \emph{maximal divisorial sets}, which are defined as follows. 
If $\pi\colon Y\to X$ is a birational map such that $Y$ is smooth and $E$ is a smooth divisor on $Y$,
then $C^q_X(E)$ is the closure of $\pi_{\infty}(\Cont^{\geq q}(E))$. It is easy to see that 
$\ord_{C^q_X(E)}=q\cdot\ord_E$. For a discussion of these subsets of $X_{\infty}$, we refer to \cite{ELM04} and
\cite{dFEI08}. 
With this notation, we
consider the closed subset
\[
T_m:=\bigcup_{i=1}^r C^{m/q_i}_X(E_i).
\]
Note that we have $T_m\subseteq C_m$, hence
\[
\fb_{jm}(C_m)\subseteq\fb_{jm}(T_m)
=\bigcap_{i=1}^r\{f\in R\mid\ord_{E_i}(f)\geq jq_i\}=\overline{\fa^j},
\]
where we denote by $\overline{\fc}$ the integral closure of an ideal $\fc$. 
We conclude that
\[
e(\fb_{jm}(C_m))\geq e(\overline{\fa}^j)=j^n\cdot e(\fa).
\]
Dividing by $(jm)^n$ and letting $j$ go to infinity, we get $\vol(C_m)\geq \frac{e(\fa)}{m^n}$.
This completes the proof of the proposition.
\end{proof}

Next, we review the definition of jet-codimension and prove two more preliminary
properties before addressing the proof of Theorem~\ref{t:vol-codim}. 
Recall that the Krull codimension of a closed irreducible set $C \subseteq X_\infty$
is the dimension of the local ring $\O_{X_\infty,C}$, and is denoted by $\codim(C)$.
The definition extends to an arbitrary set $C \subseteq X_\infty$ by taking
the smallest codimenion of an irreducible component of the closure $\ov C$. 

While the Krull codimension is computed from the local rings of $X_\infty$, 
the jet-codimension is computed from the finite levels $X_m$. 
In order to define it, we need the following lemma. 

\begin{lem}\label{limit_jet_codim}
For every subset $C \subseteq X_\infty$, the limit
\[
\lim_{m \to \infty}\((m+1)n - \dim \ov{\p_m(C)}\)
\]
exists. 
\end{lem}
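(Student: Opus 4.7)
The plan is to show that the sequence $a_m := (m+1)n - \dim \ov{\p_m(C)}$ is non-decreasing in $m \ge 0$; this will imply that the limit exists in $\N \cup \{\infty\}$. First, $a_m$ is a well-defined non-negative integer: $\ov{\p_m(C)}$ is a closed subset of the finite-type $k$-scheme $X_m$, so its dimension is a well-defined non-negative integer, and the inclusion $\ov{\p_m(C)} \subseteq \ov{\p_m(X_\infty)}$ combined with the standard bound $\dim \ov{\p_m(X_\infty)} \le (m+1)n$ for any variety $X$ of dimension $n$ (a consequence of Kolchin's irreducibility theorem together with a dimension count over the smooth locus) gives $a_m \ge 0$.

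The heart of the argument is the inequality
\[
\dim \ov{\p_{m+1}(C)} \le \dim \ov{\p_m(C)} + n.
\]
The truncation $\p_m^{m+1} \colon X_{m+1} \to X_m$ restricts to a dominant morphism $g \colon \ov{\p_{m+1}(C)} \to \ov{\p_m(C)}$, since $\p_m^{m+1}(\p_{m+1}(C)) = \p_m(C)$. By the fiber-dimension theorem, the inequality follows once one proves that every fiber of $g$ has dimension at most $n$. As these fibers are contained in the fibers of the ambient truncation $\ov{\p_{m+1}(X_\infty)} \to \ov{\p_m(X_\infty)}$, the problem reduces to the following claim: for any variety $X$ of dimension $n$ and every $m \ge 0$, each fiber of $\ov{\p_{m+1}(X_\infty)} \to \ov{\p_m(X_\infty)}$ has dimension at most $n$.

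To prove this claim, I would induct on $\dim X$. When $X$ is smooth, $X_{m+1} \to X_m$ is a Zariski-locally trivial affine bundle of rank $n$, so every fiber has dimension $n$. For $X$ singular, decompose $X_\infty = U_\infty \sqcup \p^{-1}(X_\sing)$ with $U = X_\reg$ and $\p \colon X_\infty \to X$ the base-point map. Fibers over jets based in $U$ are controlled by the smooth case. For fibers over jets based at $x \in X_\sing$, split the contribution between arcs entirely contained in $X_\sing$ (handled by induction, since $\dim X_\sing < n$) and arcs based at $x$ escaping $X_\sing$; the latter lifts can be parametrized by (a translate of) the tangent cone $C_x X$, which has dimension $\dim X = n$.

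The main obstacle is the rigorous justification of the tangent-cone bound for the escaping-arcs contribution. Direct calculation for examples such as the node $\{xy = 0\}$ and the cusp $\{y^2 = x^3\}$ confirms that the lifts of the zero jet at the singular origin trace out precisely the tangent cone; extending this to the general case requires Artin approximation (or Greenberg's theorem) to translate the formal lifting problem into a finite-dimensional statement controlled by $C_x X$. An alternative strategy would use resolution of singularities together with the Mather log discrepancy machinery developed in Section~\ref{s:discr} to transfer dimension estimates from a smooth model.
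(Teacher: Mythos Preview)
Your overall strategy---showing that $a_m = (m+1)n - \dim\ov{\pi_m(C)}$ is non-decreasing via the bound $\dim\ov{\pi_{m+1}(C)} \le \dim\ov{\pi_m(C)} + n$---is exactly the paper's. The divergence is in how that inequality is obtained. The paper does not reprove the fiber bound: it quotes \cite[Lemma~4.3]{DL99}, which says that the fibers of the map $\pi_{m+1}(X_\infty) \to \pi_m(X_\infty)$ between the \emph{constructible} images (constructible by Greenberg's theorem) have dimension at most $n$, and then applies a short elementary lemma (Lemma~\ref{lem_constructible}): if $A$ is constructible with $\dim(f^{-1}(y)\cap A)\le d$ for all $y$, then $\dim\ov{B} \le d + \dim\ov{f(B)}$ for every $B \subseteq A$. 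With $A = \pi_{m+1}(X_\infty)$, $B = \pi_{m+1}(C)$, and $f=\pi^{m+1}_m$, this gives the inequality directly, with no need to control fibers over boundary points of the closures.

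Your route has a genuine gap. You reduce to bounding the fibers of $\ov{\pi_{m+1}(X_\infty)} \to \ov{\pi_m(X_\infty)}$, which is a priori stronger than the Denef--Loeser statement (passing to closures can enlarge fibers), and then try to prove this stronger claim by induction on $\dim X$. The tangent-cone heuristic, however, only addresses the case $m=0$: the liftable $1$-jets through a point $x$ do lie in $C_xX$, but for a general $m$-jet $\gamma_m$ based at a singular point the set of lifts in $X_{m+1}$ is an affine translate of the full Zariski tangent space $T_xX$ (of dimension $>n$), and cutting this down to the jets liftable to arcs is precisely the nontrivial content of \cite[Lemma~4.3]{DL99}. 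Moreover, your arc-level decomposition into ``arcs contained in $X_{\rm sing}$'' versus ``escaping arcs'' does not descend to a decomposition of the jet-level fiber, since a single $(m{+}1)$-jet can be the truncation of arcs of both kinds. The clean fix is to cite the Denef--Loeser lemma for the bound on constructible fibers and then use a constructibility argument such as Lemma~\ref{lem_constructible} to absorb the passage to closures---which is what the paper does.
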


\begin{proof}
It follows from \cite[Lemma~4.3]{DL99} that for every $m$, the fibers of the map
$\pi_{m+1}(X_{\infty})\to \pi_m(X_{\infty})$ have dimension $\leq n$ (note that both sets are constructible 
by a result due to Greenberg \cite{Gre66}). It follows from Lemma~\ref{lem_constructible} below that
$\dim \overline{\pi_{m+1}(C)}\leq \dim \overline{\pi_m(C)}+n$, hence the sequence $(a_m)_{m\geq 1}$
with $a_m=(m+1)n - \dim \ov{\p_m(C)}$ is a non-decreasing sequence of integers. Therefore it either stabilizes
or it has limit infinity.
\end{proof}

\begin{lem}\label{lem_constructible}
Let $f\colon V\to W$ be a morphism of algebraic varieties over $k$ and suppose that $d$ is a non-negative integer and $A$ is a constructible subset
of $V$ such that for every $y\in f(A)$, we have $\dim(f^{-1}(y)\cap A)\leq d$. For every subset $B\subseteq A$, we have
$$\dim(\overline{B})\leq d+\dim(\overline{f(B)}).$$
\end{lem}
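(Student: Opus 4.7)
The plan is to reduce the statement to bounding the dimension of a single irreducible component of $\overline{B}$ of maximal dimension, and then to use the constructibility of $A$ to extract a locally closed irreducible subset to which the fiber dimension theorem applies.

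First I would fix an irreducible component $B_i$ of $\overline{B}$ with $\dim B_i=\dim\overline{B}$; it suffices to prove $\dim B_i\leq d+\dim\overline{f(B)}$. A standard argument shows that $\overline{B\cap B_i}=B_i$: if the closure were a proper closed subset of $B_i$, then writing $\overline{B}=\overline{B\cap B_i}\cup\overline{B\smallsetminus B_i}$ and using $B\smallsetminus B_i\subseteq\bigcup_{j\neq i}B_j$, one obtains $B_i\subseteq\overline{B\cap B_i}\cup\bigcup_{j\neq i}B_j$, contradicting the fact that $B_i$ is an irreducible component of $\overline{B}$. In particular, since $B\cap B_i\subseteq A\cap B_i\subseteq B_i$, we also have $\overline{A\cap B_i}=B_i$. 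Moreover, $f(B_i)\subseteq\overline{f(B\cap B_i)}\subseteq\overline{f(B)}$, hence $\overline{f(B_i)}\subseteq\overline{f(B)}$.

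Now $A\cap B_i$ is constructible in $V$, so we can write it as a finite disjoint union of locally closed irreducible subsets $C_1,\dots,C_N$. Because $\overline{A\cap B_i}=B_i=\bigcup_l\overline{C_l}$ and $B_i$ is irreducible, there exists some index with $\overline{C_l}=B_i$; call it $C_0$. Then $C_0$ is an irreducible variety with $\dim C_0=\dim B_i$, contained in $A\cap B_i$, and $\overline{f(C_0)}\subseteq\overline{f(B_i)}\subseteq\overline{f(B)}$.

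Finally I would apply the fiber dimension theorem to the dominant morphism $C_0\to\overline{f(C_0)}$: there is a dense open subset of $\overline{f(C_0)}$ over whose points $y$ of $f(C_0)$ the fiber $f^{-1}(y)\cap C_0$ has dimension exactly $\dim C_0-\dim\overline{f(C_0)}=\dim B_i-\dim\overline{f(C_0)}$. Since $C_0\subseteq A$, such a $y$ lies in $f(A)$ and satisfies $\dim(f^{-1}(y)\cap C_0)\leq\dim(f^{-1}(y)\cap A)\leq d$ by hypothesis. Combining, $\dim B_i\leq d+\dim\overline{f(C_0)}\leq d+\dim\overline{f(B)}$, which gives the desired inequality upon maximizing over $i$.

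The one delicate point, which I expect to be the main conceptual obstacle, is that $B$ itself is not assumed constructible, so one cannot apply the fiber dimension theorem directly to $B$ or to its closure; the resolution is the passage to $A\cap B_i$, whose constructibility is inherited from $A$ and whose closure coincides with $B_i$ by the argument of the second paragraph.
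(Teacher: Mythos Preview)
Your proof is correct and rests on the same two ingredients as the paper's: the constructibility of $A$ (to produce a locally closed piece on which one can work) and the fiber dimension theorem. The execution differs slightly in the order of operations. The paper first decomposes $A=\bigcup_i A_i$ into locally closed pieces and reduces to the case where $A$ itself is locally closed; then $A\cap\overline{B}$ is automatically open and dense in $\overline{B}$ (since $B\subseteq A$), and the fiber bound applied to the morphism $A\cap\overline{B}\to\overline{f(B)}$ gives the result in one stroke. You instead fix a top-dimensional irreducible component $B_i$ of $\overline{B}$, prove the density statement $\overline{B\cap B_i}=B_i$ by hand, and only then stratify $A\cap B_i$ to extract an irreducible locally closed $C_0$ dense in $B_i$. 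The paper's ordering is a bit shorter because it sidesteps the component-by-component density argument, but your route has the advantage of making explicit where irreducibility enters and of applying the fiber dimension theorem in its cleanest form (to an irreducible source with a dominant map).
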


\begin{proof}
We can write $A=\bigcup_{i=1}^rA_i$, with each $A_i$ a locally closed subset of $V$. If $B_i=B\cap A_i$, then $B=\bigcup_{i=1}^rB_i$,
$\overline{B}=\bigcup_{i=1}^r\overline{B_i}$, and $\overline{f(B)}=\bigcup_{i=1}^r\overline{f(B_i)}$. Since it is enough to prove the assertion
for each $B_i$, it follows that we may assume that $A$ is a locally closed subset. In this case $A$ is open in $\overline{A}$, hence $A\cap\overline{B}$
is a dense open subset of $\overline{B}$. Since $\dim(\overline{B})=\dim(A\cap\overline{B})$ and the fibers of the morphism $A\cap\overline{B}\to \overline{f(B)}$
have dimension $\leq d$, we obtain the assertion in the lemma.
\end{proof}

\begin{defi}
The \emph{jet-codimension} of an irreducible closed subset $C$ of $X_\infty$ 
is defined to be
\[
\jetcodim(C) := \lim_{m \to \infty}\((m+1)n - \dim \ov{\p_m(C)}\).
\]
For an arbitrary subset $C\subseteq X_{\infty}$, we define $\jetcodim(C)$ to be the 
smallest jet-codimension of an irreducible component of $\overline{C}$.
\end{defi}

\begin{rmk}
It follows from the proof of Lemma~\ref{limit_jet_codim}
that if $C$ is closed and irreducible, then $\jetcodim(C)\geq n-\dim\overline{\pi(C)}\geq 0$.
This implies that for every $C\subseteq X$, we have $\jetcodim(C)\geq 0$. 
\end{rmk}

\begin{rmk}
If $C_1\subseteq C_2\subseteq X_{\infty}$, then $\jetcodim(C_1)\geq\jetcodim(C_2)$. Indeed,
if $C'_1$ is an irreducible component of $\overline{C_1}$, then there is an irreducible component $C'_2$ of $C_2$
such that $C'_1\subseteq C'_2$. In this case, for every $m$ we have
$$(m+1)n-\dim \overline{\pi_m(C'_1)}\geq (m+1)n-\dim \overline{\pi_m(C'_2)}.$$
By letting $m$ go to infinity, we conclude that $\jetcodim(C'_1)\geq\jetcodim(C'_2)\geq\codim(C_2)$.
Since this holds for every irreducible component of $\overline{C_1}$, we conclude that $\jetcodim(C_1)\geq\jetcodim(C_2)$. 
\end{rmk}

\begin{rmk}
For any subset $C \subseteq X_\infty$, 
we have $\codim(C) = \codim(\ov C)$ and $\jetcodim(C) = \jetcodim(\ov C)$.
\end{rmk}

If $X$ is smooth and $C\subseteq X_{\infty}$ is a cylinder, 
then $\jetcodim(C) = \codim(\p_m(C),X_m)$ for all $m \gg 1$.
As the next proposition shows, this is equal to
the Krull codimension $\codim(C)$.
More generally, we have the following property.

\begin{prop}
\label{p:codim}
If $X$ is smooth and $C \subseteq X_\infty$ is any set, 
then $\jetcodim(C) = \codim(C)$.
\end{prop}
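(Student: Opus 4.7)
The plan is to compare the two quantities through the filtered presentation $A := \mathcal{O}(X_\infty) = \varinjlim_m A_m$, where $A_m := \mathcal{O}(X_m)$. Since both $\codim$ and $\jetcodim$ are defined as minima over irreducible components of $\ov C$, I reduce first to the case where $C$ is closed and irreducible. Let $\fp \subset A$ be the prime ideal corresponding to $C$ and set $\fp_m := \fp \cap A_m$, which is the prime of $S_m := \ov{\pi_m(C)}$ in $X_m$; then $\codim(C) = \dim A_\fp$.

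The smoothness of $X$ enters in two ways. First, each $X_m$ is smooth, irreducible, and of pure dimension $(m+1)n$, so $A_m$ is a catenary domain and
\[
\dim (A_m)_{\fp_m} = (m+1)n - \dim S_m =: e_m(C),
\]
which is precisely the non-decreasing sequence from Lemma~\ref{limit_jet_codim} whose limit is $\jetcodim(C)$. Second, each truncation $X_{m+1} \to X_m$ is smooth, hence $A_m \to A_{m+1}$ is flat; since a filtered colimit of flat modules is flat, $A$ itself is flat over every $A_m$.

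With these ingredients in place, both inequalities follow from the chain-of-primes characterization of Krull dimension. For $\codim(C) \le \jetcodim(C)$: any chain $\fP_0 \supsetneq \cdots \supsetneq \fP_r = \fp$ in $A$ is witnessed by elements $f_i \in \fP_i \setminus \fP_{i+1}$, all of which lie in some common $A_m$, yielding a strict chain of length $r$ in $A_m$ ending at $\fp_m$; thus $r \le e_m(C) \le \jetcodim(C)$. For $\codim(C) \ge \jetcodim(C)$: flatness of $A_m \to A$ gives going-down, so any chain of length $e_m(C)$ in $A_m$ ending at $\fp_m$ lifts iteratively to a chain of the same length in $A$ ending at $\fp$, whence $\codim(C) \ge e_m(C)$ for every $m$.

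The only step that genuinely requires the smoothness hypothesis is the flatness (hence going-down) of $A_m \to A$, and this is the main technical point of the argument. It is also precisely the place where the naive version breaks for singular $X$, which is consistent with the paper's strategy of working with $\jetcodim$ in place of $\codim$ in the Cohen-Macaulay setting.
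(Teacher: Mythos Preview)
Your argument is correct and takes a genuinely different route from the paper. (One notational slip: in the chain ``$\fP_0 \supsetneq \cdots \supsetneq \fP_r = \fp$'' you want $\fp$ at the \emph{top}, not the bottom, since $\codim(C)=\dim A_{\fp}$ is the height of $\fp$; the rest of your argument, including the going-down step, is consistent with the intended reading $\fp = \fP_0 \supsetneq \cdots \supsetneq \fP_r$.) The paper instead proves an auxiliary lemma that bifurcates on whether the closed irreducible set $C$ is a cylinder: for cylinders it quotes \cite[Corollary~1.9]{ELM04}, while for non-cylinders it exhibits the strictly decreasing sequence of cylinders $F_i = \pi_i^{-1}(\overline{\pi_i(C)})$ and uses it to show directly that both $\codim(C)$ and $\jetcodim(C)$ are infinite. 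Your approach is uniform and purely commutative-algebraic: the presentation $A = \varinjlim A_m$, flatness of $A_m \to A$ (from smoothness of the truncation maps $X_{m+1}\to X_m$), and going-down handle both the cylinder and non-cylinder cases at once, with no external citation needed. The paper's route has the small bonus of recording exactly when the common value is finite (namely, when $C$ is a cylinder), which your argument does not isolate; conversely, your argument is self-contained and makes transparent precisely where the smoothness hypothesis is used.
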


\begin{proof}
The proof of the proposition follows immediately by applying the next lemma
to the irreducible components of $\ov C$. 
\end{proof}

\begin{lem}
If $X$ is smooth and $C \subseteq X_\infty$ is a closed irreducible subset, then 
\[
\jetcodim(C) = \codim(C),
\]
and this number is finite if and only if $C$ is a cylinder. 
\end{lem}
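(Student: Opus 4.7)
The plan is to analyze the non-decreasing integer sequence $c_m:=(m+1)n-\dim C_m$ (with $C_m:=\overline{\pi_m(C)}$), whose limit defines $\jetcodim(C)$, via the smooth geometry of the truncations, and then to match the resulting value with $\codim(C)$ through the inverse limit description of $X_\infty$. The essential input for smooth $X$ is that each truncation $\pi_{m+1,m}\colon X_{m+1}\to X_m$ is a Zariski locally trivial $\A^n$-bundle, so $\pi_{m+1,m}^{-1}(C_m)$ is irreducible of dimension $\dim C_m+n$. Since $C_{m+1}$ is an irreducible closed subset of this preimage surjecting onto $C_m$, one obtains the dichotomy: $C_{m+1}=\pi_{m+1,m}^{-1}(C_m)$ precisely when $c_{m+1}=c_m$, and $C_{m+1}$ is strictly smaller otherwise. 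Being non-decreasing and integer-valued, $(c_m)$ either stabilizes at some index $m_0$ with value $c$, or tends to $\infty$.

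In the stabilizing case my first task is to show that $C$ coincides with the cylinder $D:=\pi_{m_0}^{-1}(C_{m_0})$, which is closed and irreducible (preimage of an irreducible set under a smooth morphism with irreducible fibers) and satisfies $\overline{\pi_m(D)}=C_m$ for every $m$. Writing $\O(X_\infty)=\varinjlim \O(X_m)$, any prime of $\O(X_\infty)$ is the union of its intersections with the rings $\O(X_m)$, and these intersections cut out the respective closures of projections; hence the equalities $\overline{\pi_m(C)}=\overline{\pi_m(D)}$ for all $m$ force $C=D$. To then compute $\codim(C)$, I would identify $\O_{X_\infty,C}$ with the filtered colimit of the local rings $(\O(X_m))_{\fp_m}$, where $\fp_m$ is the generic prime of $C_m$. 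For $m\ge m_0$ these local rings all have dimension $c$ (the smooth map $\pi_{m+1,m}$ sends $\fp_m$ to its flat extension $\fp_{m+1}$), and a dimension analysis of prime chains in the colimit using flatness gives $\codim(C)=\dim \O_{X_\infty,C}=c=\jetcodim(C)$.

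In the remaining case $c_m\to\infty$, each cylinder $D_m:=\pi_m^{-1}(C_m)$ strictly contains $C$, for otherwise $C$ would itself be a cylinder and would fall into the stabilizing case, contradicting $c_m\to\infty$. Therefore $C\subsetneq D_m$ as irreducible closed subsets, so $\codim(C)>\codim(D_m)=c_m$ by applying the cylinder computation to $D_m$; letting $m\to\infty$ gives $\codim(C)=\jetcodim(C)=\infty$, and $C$ is not a cylinder.

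The main obstacle I expect is the Krull dimension computation $\dim \O_{X_\infty,C}=c$ in the cylinder case, since $\O(X_\infty)$ is not Noetherian. Two directions must be carefully justified: that every prime chain in $\O_{X_\infty,C}$ descends to a chain of the same length at some finite level $(\O(X_m))_{\fp_m}$ with $m\ge m_0$ (using that any finite collection of elements witnessing strict containments lies in some $\O(X_m)$), and that prime chains produced at a finite level remain strict after passing to the colimit (using flatness of the transition maps). Once this is handled, the rest of the argument is a formal comparison of the two codimensions case by case.
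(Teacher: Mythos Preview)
Your proposal is correct and follows essentially the same overall structure as the paper's proof: both hinge on the cylinders $F_m=\pi_m^{-1}(\overline{\pi_m(C)})$ and the dichotomy of whether the non-decreasing sequence $c_m=(m+1)n-\dim\overline{\pi_m(C)}$ stabilizes. The one genuine difference is in the cylinder case. The paper simply cites \cite[Corollary~1.9]{ELM04} to obtain $\codim(C)=\codim(\pi_m(C),X_m)$ for $m\gg 1$, whereas you prove this from scratch by identifying $\O_{X_\infty,C}$ with the filtered colimit $\varinjlim(\O(X_m))_{\fp_m}$ and computing its Krull dimension directly (your outline for handling the non-Noetherian issue---descending finite chains to a finite level via witnesses, and lifting chains via the flat, locally polynomial transition maps---is the right one and works). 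In the non-cylinder case the two arguments are minor variants of each other: the paper extracts an infinite strictly decreasing chain $F_{i_0}\supsetneq F_{i_1}\supsetneq\cdots\supsetneq C$ directly and projects it to $X_p$ to bound $\jetcodim(C)$ from below, while you instead invoke the already-established cylinder equality $\codim(D_m)=c_m$ and use $\codim(C)>\codim(D_m)$. Your route buys self-containment at the cost of spelling out the colimit Krull-dimension argument; the paper's route is shorter but relies on the cited reference.
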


\begin{proof}
If $C$ is a cylinder, then it follows from \cite[Corollary~1.9]{ELM04} that
\[
\jetcodim(C) =  \codim(\p_m(C),X_m)=\codim(C) \quad \text{for}\,\, m \gg 1.
\]
Therefore it suffices to show that if $C$ is not a cylinder then 
\[
\jetcodim(C) = \dim(C) = \infty.
\]
In order to check this, consider the sequence of closed irreducible cylinders
\[
F_i := \p_i^{-1}(\ov{\p_i(C)}), \quad i \ge 0. 
\]
We have inclusions
\[
C \subseteq \dots \subseteq F_{i+1} \subseteq F_i \subseteq 
\dots \subseteq F_1 \subseteq F_0 \subseteq X_\infty.
\]
Moreover, since $C$ is closed, we have $C = \bigcap_{i \ge 0} F_i$.

Since $C$ is not a cylinder, the sequence $(F_i)_{i\geq 0}$ does not stabilize.
Therefore we can pick a subsequence $(F_{i_m})_{m\geq 0}$ such that
\[
C \subsetneq F_{i_m} \subsetneq F_{i_{m-1}} \subsetneq 
\dots \subsetneq F_{i_1} \subsetneq F_{i_0} \subsetneq X_\infty,
\]
which clearly implies that $\codim(C) = \infty$. 
In fact, for every $m$, if $p \geq i_m$, then we also have the sequence
\[
\overline{\p_p(C)} \subseteq \p_p(F_{i_m}) \subsetneq \p_p(F_{i_{m-1}}) \subsetneq 
\dots \subsetneq \p_p(F_{i_1}) \subsetneq \p_p(F_{i_0}) \subsetneq X_p.
\]
Note that for every $k\leq m$, the subset $\p_p(F_{i_k})$ of $X_p$ is irreducible and closed since $p \ge i_k$.
Therefore $\codim(\ov{\p_p(C)},X_p) \ge m$ and we conclude that
$\jetcodim(C) = \infty$.
\end{proof}

\begin{rmk}
The definition of jet-codimension generalizes to all sets the definition of 
codimension of a quasi-cylinder given in \cite{dFEI08}.
In general, if $X$ is singular and $C \subseteq X_\infty$ is a closed irreducible set, 
then there is only an inequality $\codim(C) \le \jetcodim(C)$ which can be strict
(e.g., see \cite[Example~2.8]{IR13}).
\end{rmk}

If $E$ is a prime exceptional divisor
over $X$ and $C_X^q(E) \subseteq X_\infty$ is the maximal divisorial set
associated to the divisorial valuation $q\.\ord_E$, then we have
\begin{equation}
\label{eq:^a_E=codim}
\jetcodim(C_X^q(E)) = q\.\^a_E(X)
\end{equation}
by \cite[Theorem~3.8]{dFEI08}.
Using this fact, it is easy to extend \cite[Corollary 0.2]{Mus02b} to the 
singular setting, as follows. 
This proposition is also proved in \cite[Proposition~3.5]{Ish13}, but since the
proof is short, we include it for the convenience of the reader. 

\begin{prop}
\label{p:Mat-lct-jets}
For every proper, nonzero ideal $\fa \subseteq R$ and every positive integer $m$, we have 
\[
\jetcodim(\Cont^{\ge m}(\fa)) \ge m\. \^\lct(\fa),
\]
with equality if $m$ is sufficiently divisible.
\end{prop}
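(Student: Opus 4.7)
Set $C := \Cont^{\ge m}(\fa)$. The plan is to first observe that $C$ is a closed cylinder (since $\ord_\gamma(\fa) \ge m$ depends only on $\pi_{m-1}(\gamma)$), then decompose it via \cite[Proposition~3.5]{dFEI08} into finitely many irreducible closed cylinders $C_1, \ldots, C_r$, bound $\jetcodim(C_j)$ from below for each $j$, and finally exhibit a component realizing the equality when $m$ is sufficiently divisible. The key input throughout is \eqref{eq:^a_E=codim}, which computes the jet-codimension of a maximal divisorial set in terms of the Mather log discrepancy.

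For the lower bound, fix $j$ and distinguish whether $C_j$ is fat or thin. In the fat case, by \cite[Propositions~2.12 and~3.9]{dFEI08} the semi-valuation $\ord_{C_j}$ is divisorial, say $\ord_{C_j} = q\cdot\ord_E$ for some prime divisor $E$ over $X$ and some positive integer $q$, and $C_j \subseteq C_X^q(E)$ by the maximality property of divisorial sets. The inclusion $C_j \subseteq \Cont^{\ge m}(\fa)$ forces $q\cdot\ord_E(\fa) \ge m$, and combining the monotonicity of $\jetcodim$ with \eqref{eq:^a_E=codim} yields
\[
\jetcodim(C_j) \ge \jetcodim(C_X^q(E)) = q\cdot\^a_E(X) \ge m\cdot\frac{\^a_E(X)}{\ord_E(\fa)} \ge m\cdot\^\lct(\fa).
\]
In the thin case, by \cite[Lemma~5.1]{EM09} we have $C_j \subseteq (X_{\sing})_\infty$; the standard estimate $\dim\overline{\pi_k(W_\infty)} \le (k+1)\dim W$ for reduced schemes $W$ (provable by Noetherian induction, reducing to the smooth case on the regular locus of each irreducible component) then gives $\dim\overline{\pi_k(C_j)} \le (k+1)(n-1)$ for every $k$, hence $\jetcodim(C_j) \ge k+1$ for every $k$, so $\jetcodim(C_j) = \infty$. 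Taking the minimum over $j$ gives $\jetcodim(C) \ge m\cdot\^\lct(\fa)$.

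For equality when $m$ is sufficiently divisible, using the remark following the definition of $\^\lct(\fa)$, pick a prime divisor $E_0$ over $X$ with $\^\lct(\fa) = \^a_{E_0}(X)/\ord_{E_0}(\fa)$. Assuming $\ord_{E_0}(\fa) \mid m$ and setting $q_0 := m/\ord_{E_0}(\fa)$, we have $q_0\cdot\ord_{E_0}(\fa) = m$, so $C_X^{q_0}(E_0) \subseteq \Cont^{\ge m}(\fa)$. Being irreducible, $C_X^{q_0}(E_0)$ lies in some (necessarily fat) component $C_j$, and monotonicity of $\jetcodim$ together with \eqref{eq:^a_E=codim} gives
\[
\jetcodim(C_j) \le \jetcodim(C_X^{q_0}(E_0)) = q_0\cdot\^a_{E_0}(X) = m\cdot\^\lct(\fa),
\]
sandwiching $\jetcodim(C)$ to the desired value.

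The main obstacle is the treatment of thin components, which rests on the dimension bound for images of arc spaces of reduced schemes in jet schemes; once this is granted the conclusion that thin irreducible cylinders have infinite jet-codimension is immediate, and the remainder of the argument is a direct combination of the correspondence between fat irreducible cylinders and divisorial valuations with the identity \eqref{eq:^a_E=codim}.
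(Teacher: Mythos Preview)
Your proof is correct and follows essentially the same route as the paper's: both arguments reduce to the fat irreducible components of $\Cont^{\ge m}(\fa)$, identify these with (or embed them into) maximal divisorial sets via \cite[Propositions~2.12, 3.5, 3.9]{dFEI08}, apply \eqref{eq:^a_E=codim}, and then use the definition of $\^\lct(\fa)$; the equality case is handled identically by choosing a divisor computing $\^\lct(\fa)$. The one noteworthy difference is that you treat the thin components explicitly, showing they have infinite jet-codimension via the bound $\dim\overline{\pi_k(W_\infty)}\le (k+1)\dim W$, whereas the paper tacitly passes over them by asserting directly that the jet-codimension of $\Cont^{\ge m}(\fa)$ is realized on a fat component; your version is in this respect more self-contained.
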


\begin{proof}
By \cite[Propositions~3.5 and~2.12]{dFEI08}, $\Cont^{\ge m}(\fa)$ 
has finitely many fat irreducible components, and any such component $C$ is 
a maximal divisorial set. 
In particular, there is a fat irreducible component of the form
$C = C_X^q(E)$ for some divisorial valuation $q \.\ord_E$, such that
\[
\jetcodim(\Cont^{\ge m}(\fa)) = \jetcodim(C_X^q(E)) = q\.\^a_E(X),
\]
by \eqref{eq:^a_E=codim}. 
Note that $q\.\ord_E(\fa) \ge m$, since $C_X^q(E) \subseteq \Cont^{\ge m}(\fa)$.
On the other hand, we have
\[
\^\lct(\fa) \le \frac{\^a_E(X)}{\ord_E(\fa)}
\]
by the definition of Mather log canonical threshold. 
We conclude that $\jetcodim(\Cont^{\ge m}(\fa)) \ge m\. \^\lct(\fa)$.

On the other hand, suppose that $F$ is a divisor over $X$ such that $\^\lct(\fa) = \frac{\^a_F(X)}{\ord_F(\fa)}$
and suppose that $m=q\cdot\ord_F(\fa)$ for some positive integer $q$. In this case $C_X^q(F)\subseteq \Cont^{\geq m}(\fra)$,
hence 
$$\jetcodim(\Cont^{\geq m}(\fra))\leq \jetcodim(C_X^q(F))=q\cdot \^a_F(X)=m\cdot \^\lct(\fra).$$
By combining this with what we have already proved, we conclude that in this case we have 
$\jetcodim(\Cont^{\ge m}(\fa)) = m\. \^\lct(\fa)$.
\end{proof}

\begin{proof}[Proof of Theorem~\ref{t:vol-codim}]
For every $p\geq 1$, we have $C\subseteq {\rm Cont}^{\geq p}(\fb_p(C))$. 
Note that if $C$ lies over the closed point $x\in X$, defined by the maximal ideal $\fm_x$,
the ideal $\fb_p(C)$ is $\fm_x$-primary.
It follows from 
Proposition~\ref{p:Mat-lct-jets} that 
\begin{equation}\label{eq1_prop_volume}
\jetcodim(C)\geq \jetcodim\Cont^{\geq p}(\fb_p(C)))\ge p\cdot\^\lct(\fb_p(C)).
\end{equation}
On the other hand, Theorem~\ref{t:e-Mather-lct} implies that 
\begin{equation}\label{eq2_prop_volume}
\(n!\.\ell(\O_X/\fb_p(C))\)^{1/n}\.\^\lct(\fb_p(C))\geq n.
\end{equation}
By combining \eqref{eq1_prop_volume} and \eqref{eq2_prop_volume}, we get
\[
\(\frac{\ell(\O_X/\fb_p(C))}{p^n/n!}\)^{1/n}\.\jetcodim(C) \ge n.
\]
We conclude that 
\[
\vol(C)^{1/n}\.\jetcodim(C)
=\lim_{p\to\infty}\(\frac{\ell(\O_X/\fb_p(C))}{p^n/n!}\)^{1/n}\. \jetcodim(C)\geq n.
\]
This gives the first part of the statement of the theorem.
The second part follows from Proposition~\ref{p:codim}.
\end{proof}

\begin{bibdiv}
\begin{biblist}

\bib{Ber71}{article}{
   author={Bergman, George M.},
   title={A weak Nullstellensatz for valuations},
   journal={Proc. Amer. Math. Soc.},
   volume={28},
   date={1971},
   pages={32--38},
   }

\bib{BH93}{book}{
   author={Bruns, Winfried},
   author={Herzog, J{\"u}rgen},
   title={Cohen-Macaulay rings},
   series={Cambridge Studies in Advanced Mathematics},
   volume={39},
   publisher={Cambridge University Press, Cambridge},
   date={1993},
}

\bib{Cut14}{article}{
author={Cutkosky, Steven Dale},
title={Asymptotic multiplicities of graded families of ideals and linear series},
journal={Adv. Math.},
volume={264},
date={2014},
pages={55--113}
}
\bib{dFEI08}{article}{
   author={de Fernex, Tommaso},
   author={Ein, Lawrence},
   author={Ishii, Shihoko},
   title={Divisorial valuations via arcs},
   journal={Publ. Res. Inst. Math. Sci.},
   volume={44},
   date={2008},
   number={2},
   pages={425--448},
   issn={0034-5318},
}

\bib{dFEM03}{article}{
   author={de Fernex, Tommaso},
   author={Ein, Lawrence},
   author={Musta{\c{t}}{\u{a}}, Mircea},
   title={Bounds for log canonical thresholds with applications to
   birational rigidity},
   journal={Math. Res. Lett.},
   volume={10},
   date={2003},
   number={2-3},
   pages={219--236},
}

\bib{dFEM04}{article}{
   author={de Fernex, Tommaso},
   author={Ein, Lawrence},
   author={Musta{\c{t}}{\u{a}}, Mircea},
   title={Multiplicities and log canonical threshold},
   journal={J. Algebraic Geom.},
   volume={13},
   date={2004},
   number={3},
   pages={603--615},
}

\bib{DL99}{article}{
   author={Denef, Jan},
   author={Loeser, Fran{\c{c}}ois},
   title={Germs of arcs on singular algebraic varieties and motivic
   integration},
   journal={Invent. Math.},
   volume={135},
   date={1999},
   number={1},
   pages={201--232},
}

\bib{ELM04}{article}{
   author={Ein, Lawrence},
   author={Lazarsfeld, Robert},
   author={Musta{\c{t}}{\v{a}}, Mircea},
   title={Contact loci in arc spaces},
   journal={Compos. Math.},
   volume={140},
   date={2004},
   number={5},
   pages={1229--1244},
}

\bib{ELS03}{article}{
   author={Ein, Lawrence},
   author={Lazarsfeld, Robert},
   author={Smith, Karen E.},
   title={Uniform approximation of Abhyankar valuation ideals in smooth
   function fields},
   journal={Amer. J. Math.},
   volume={125},
   date={2003},
   number={2},
   pages={409--440},
}

\bib{EM09}{article}{
author={Ein, Lawrence},
author={Musta{\c{t}}{\u{a}}, Mircea},
     title = {Jet schemes and singularities},
 booktitle = {Algebraic geometry---{S}eattle 2005. {P}art 2},
  series = {Proc. Sympos. Pure Math.},
  volume = {80},
   pages = {505--546},
 publisher = {Amer. Math. Soc., Providence, RI},
      year = {2009}
      }

\bib{Eis95}{book}{
   author={Eisenbud, David},
   title={Commutative algebra. With a view toward algebraic geometry},
   series={Graduate Texts in Mathematics},
   volume={150},
   publisher={Springer-Verlag, New York},
   date={1995},
}

\bib{Ful98}{book}{
   author={Fulton, William},
   title={Intersection theory},
   series={Ergebnisse der Mathematik und ihrer Grenzgebiete. 3. Folge},
   edition={2},
   publisher={Springer-Verlag},
   place={Berlin},
   date={1998},
   pages={xiv+470},
}

\bib{Gre66}{article}{
AUTHOR = {Greenberg, Marvin J.},
     TITLE = {Rational points in {H}enselian discrete valuation rings},
   JOURNAL = {Inst. Hautes \'Etudes Sci. Publ. Math.},
    NUMBER = {31},
      YEAR = {1966},
     PAGES = {59--64}
     }

\bib{HS01}{article}{
author={H\"{u}bl, Reinhold},
author={Swanson, Irena},
title={Discrete valuations centered on local domains},
journal={J. Pure Appl. Algebra},
volume={161},
date={2001},
number={1-2},
pages={145--166}
}

\bib{Ish05}{article}{
   author={Ishii, Shihoko},
   title={Arcs, valuations and the Nash map},
   journal={J. Reine Angew. Math.},
   volume={588},
   date={2005},
   pages={71--92},
}

\bib{Ish13}{article}{
   author={Ishii, Shihoko},
   title={Mather discrepancy and the arc spaces},
   journal={Ann. Inst. Fourier (Grenoble)},
   volume={63},
   date={2013},
   number={1},
   pages={89--111},
}

\bib{IR13}{article}{
   author={Ishii, Shihoko},
   author={Reguera, Ana J.},
   title={Singularities with the highest Mather minimal log discrepancy},
   journal={Math. Z.},
   volume={275},
   date={2013},
   number={3-4},
   pages={1255--1274},
}

\bib{KM98}{book}{
   author={Koll{\'a}r, J{\'a}nos},
   author={Mori, Shigefumi},
   title={Birational geometry of algebraic varieties},
   series={Cambridge Tracts in Mathematics},
   volume={134},
   note={With the collaboration of C. H. Clemens and A. Corti;
   Translated from the 1998 Japanese original},
   publisher={Cambridge University Press},
   place={Cambridge},
   date={1998},
   pages={viii+254},
}

\bib{KN}{article}{
   author={Koll{\'a}r, J{\'a}nos},
   author={N\'emethi, Andr\'as},
   title={Durfee's conjecture on the signature of smoothings of surface singularities, 
   with an appendix by Tommaso de Fernex},
   date={2014},
   note={{\tt arXiv:1411.1039}},
}

\bib{LM09}{article}{
   author={Lazarsfeld, Robert},
   author={Musta{\c{t}}{\u{a}}, Mircea},
   title={Convex bodies associated to linear series},
   journal={Ann. Sci. \'Ec. Norm. Sup\'er. (4)},
   volume={42},
   date={2009},
   number={5},
   pages={783--835},
}

\bib{Mus02a}{article}{
   author={Musta{\c{t}}{\v{a}}, Mircea},
   title={On multiplicities of graded sequences of ideals},
   journal={J. Algebra},
   volume={256},
   date={2002},
   number={1},
   pages={229--249},
}

\bib{Mus02b}{article}{
   author={Musta{\c{t}}{\v{a}}, Mircea},
   title={Singularities of pairs via jet schemes},
   journal={J. Amer. Math. Soc.},
   volume={15},
   date={2002},
   number={3},
   pages={599--615 (electronic)},
}

\bib{Swa11}{article}{
author = {Swanson, Irena},
     title = {Rees valuations},
 booktitle = {Commutative algebra---{N}oetherian and non-{N}oetherian
              perspectives},
     pages = {421--440},
 publisher= {Springer, New York},
      year = {2011},
}

\bib{ZS60}{book}{
author={Zariski, Oscar},
author={Samuel, Pierre},
title={Commutative algebra. Vol. II},
series={The University Series in Higher Mathematics},
   publisher={D. Van Nostrand Co., Inc., Princeton, N.J.--Toronto--London--New York},
   date={1960},
   }

\end{biblist}
\end{bibdiv}

\end{document}